\newtheorem{thm}{Theorem}
\newtheorem{lem}[thm]{Lemma}
\newtheorem{prop}[thm]{Proposition}
\newtheorem{dfn}[thm]{Definition}
\newtheorem{rem}[thm]{Remark}
\newenvironment{proof}[1][Proof]{\textbf{#1.} }{\ \rule{0.5em}{0.5em}}
\let\eps=\varepsilon
\let\kappa=\varkappa
\def\bt{\hbox{$\bf \cdot$}}
\def\osc{\hbox{\rm Osc}}
\let\eps=\varepsilon
\let\a=\alpha
\let\b=\beta
\let\d=\delta
\let\D=\Delta
\let\s=\sigma
\let\l=\lambda
\let\ls=\leqslant
\let\gs=\geqslant
\let\cal=\mathcal
\def\pr{\hbox{\bf P}}
\def\E{\hbox{\bf E}}
\title{The rate of convergence  of estimate for Hurst index  of fractional Brownian motion involved into stochastic differential equation\tnoteref{t1}}
\author[kk]{K. Kubilius\corref{cor1}}
\ead{kestutis.kubilius@mii.vu.lt}
\author[myus]{Y. Mishura}
\ead{myus@univ.kiev.ua}
\address[kk]{Vilniaus university Institute of Mathematics and Informatics,
                              Akademijos 4, LT-08663 Vilnius, Lithuania}
\address[myus]{National Taras Shevchenko Kyiv University,
                              Volodymyrska 64, 01601 Kiev, Ukraine}
\date{}
\begin{document}
\begin{abstract}
We consider stochastic differential equation involving pathwise integral with respect to fractional Brownian motion. The estimates for the Hurst parameter are constructed according to first- and second-order quadratic variations of observed values of the solution. The rate of convergence of these estimates to the true value of a parameter is established.
\end{abstract}
\begin{keyword}
Fractional Brownian motion; stochastic differential equation; first- and second-order quadratic variations; estimates of Hurst parameter; rate of convergence.
\MSC 60G22, 60H10
\end{keyword}

\maketitle

\section{Introduction}

Consider  stochastic differential equation
\begin{equation}\label{SIE}
X_t=\xi + \int_0^t f(X_s)\,d s + \int_0^t g(X_s)\,d B^H_s,\quad t\in[0,T],\ T>0,
\end{equation}
where $f$ and $g$ are measurable functions, $B^H$ is a fractional Brownian motion (fBm) with Hurst index  $1/2<H<1$, $\xi$ is a random variable.  It is well-known that almost all sample paths of $B^H$  have bounded
$p$-variations for $p>1/H.$ Therefore it is natural to define  the integral with respect to fractional Brownian motion as pathwise Riemann-Stieltjes integral  (see,  e.g.,  \cite{y} for the original definition and \cite{dn1} for the advanced results).

A solution of stochastic differential equation \eqref{SIE}  on a given filtered
probability space  $(\Omega,{\mathcal F},{\bf P}, \mathbb{F}=\{\mathcal F_t\}, t\in[0,T])$, with respect to the
fixed fBm $(B^H,\mathbb{F}),$ $1/2<H<1$ and with $\mathcal F_0$-measurable initial condition $\xi$  is an adapted to
the filtration $\mathbb{F}$ continuous process $X=\{X_t\colon\ 0\ls  t\ls  T\}$ such
that $X_0=\xi$ a.s.,
\[
{\bf P}\bigg(\int_0^t\vert f(X_s)\vert\,ds+\big\vert\int_0^t
g(X_s)\,dB^H_s\big\vert<\infty\bigg)=1\qquad\mbox{for every}\ 0\ls  t\ls  T,
\]
and its almost all sample paths satisfy (\ref{SIE}).

For $0<\alpha\ls 1$, $\mathcal{C}^{1+\a}(\mathbb{R})$ denotes the set of all
$\mathcal{C}^{1}$-functions $g$: $\mathbb{R}\to\mathbb{R}$ such that
\[
\sup_{x}\vert g^\prime(x) \vert + \sup_{x\neq y}\frac{\vert g^\prime(x) - g^\prime(y) \vert}{\vert x
- y \vert^\a}<\infty.
\]

Let $f$ be a Lipschitz function and let
$g\in\mathcal{C}^{1+\a}(\mathbb{R})$, $\frac1H-1<\alpha\ls 1$.    Then there
exists a unique solution of equation (\ref{SIE}) with almost all sample paths in
the class of all continuous functions defined on $[0,T]$ with bounded $p$-variation for any $p>\frac1H$
(see \cite{du},
\cite{ly}, \cite{ly1} and  \cite{kk}). Different (but similar in many features) approach to the integration with respect to fractional Brownian motion based on the integration in Besov spaces and corresponding stochastic differential equations were studied in \cite{nr}, see also \cite{biagini} and \cite{mishura} where the different approaches to  stochastic integration and to stochastic differential equations  involving fractional Brownian motion  are summarized.

The main goal of the present paper is to establish the rate of convergence of two estimates of Hurst parameter  to the true value of a parameter. The estimates are based on the two types of the quadratic variations of the observed solution to  stochastic differential equation  involving  the integral with respect to fractional Brownian motion and considered on the fixed interval $[0,T]$. The paper is organized as follows: Section 2 contains some preliminary information. More precisely, subsection 2.1  describes  the properties of $p$-variations and of the  integrals with respect to the functions of bounded $p$-variations while subsection 2.2 contains the results on the asymptotic behavior of the normalized first- and second-order quadratic variations of fractional Brownian motion. Section 3 describes the rate of convergence of the first- and second-order quadratic variations of the solution to stochastic differential equation involving fBm. Section 4 contains the main result concerning  the rate of convergence of the constructed estimates of Hurst index to its true value when  the diameter of partitions of the interval $[0,T]$ tends to zero. Section 5 contains simulation results.

\section{Preliminaries}

\subsection{The functions of bounded p-variation}

First, we mention some information concerning $p$-variation and the functions of bounded $p$-variation. It is  containing, e.g., in  \cite{dn1} and \cite{y}.
Let interval $[a,b]\subset \mathbb{R}.$ Consider the following class of functions:
\[
\mathcal{W}_p\big([a,b]\big):=\big\{f: [a,b]\to\mathbb{R}{:}\ v_p\big(f;
[a,b]\big)<\infty\big\},
\]
where
\[
v_p\big(f; [a,b]\big)=\sup_{\pi}\sum_{k=1}^n \big\vert
f(x_k)-f(x_{k-1})\big\vert^p.
\]
 Here $\pi=\{x_i{:}\ i=0,\ldots,n\}$ stands for any finite partition of $[a,b]$ such that
$a=x_0<x_i<\cdots<x_n=b$. Denote $\Pi([a,b])$ the class of such partitions. We say that function $f$ has
 bounded $p$-variation on $[a;b]$ if $v_p(f;[a,b])< \infty$.

Let $V_p(f):=V_p(f;[a,b])=v_p^{1/p}(f; [a,b])$. Then for any fixed $f$ we have that $V_p(f)$ is a non-increasing
function of $p$. It means that for any $0<q<p$ the relation $V_p(f)\ls V_q(f)$ holds.

Let $a<c<b$  and let $f\in{\mathcal W}_p([a,b])$ for some $p\in(0,\infty).$ Then
\begin{gather*}
v_p\big(f;[a,c]\big)+v_p\big(f;[c,b]\big)\ls
v_p\big(f;[a,b]\big),\\[4pt]
V_p\big(f;[a,b]\big)\ls V_p\big(f;[a,c]\big)+V_p\big(f;[c,b]\big).
\end{gather*}

Let $f\in {\cal  W}_q([a,b])$ and $h\in {\cal W}_p([a,b])$, where $p^{-1}+q^{-1}>1$. Then the well-known Love-Young inequality states that
\begin{equation}\label{1.2}
\Bigg\vert\int_a^bf\,d h-f(y)\big[ h(b)-h(a) \big]\Bigg\vert \ls C_{p,q}
V_q\big(f;[a,b]\big)V_p\big(h;[a,b]\big),
\end{equation}
whence
\begin{equation}\label{1.3}
V_p\Bigg(\int_a^{\bt} f\,d h;[a,b]\Bigg)\ls C_{p,q}
V_{q,\infty}\big(f;[a,b]\big) V_p\big(h;[a,b]\big).
\end{equation}
Here $V_{q,\infty}(f;[a,b])=V_q(f;[a,b])+\sup_{a\ls x\ls b}\vert f(x)\vert$,
$C_{p,q}=\zeta(p^{-1}+q^{-1})$ and $\zeta(s)=\sum_{n\gs 1} n^{-s}$ is the Riemann zeta function.
Further, for any $y\in[a,b]$
\begin{align}\label{1.3a}
V_p\Bigg(\int_a^{\bt} [f(x)-f(y)]\,d h(x);[a,b]\Bigg)\ls& C_{p,q}
\big[V_q\big(f;[a,b]\big)\nonumber\\+\sup_{a\ls x\ls b}\vert f(x)-f(y)\vert\big] V_p\big(h;[a,b]\big)
\ls& 2C_{p,q}V_q\big(f;[a,b]\big) V_p\big(h;[a,b]\big).
\end{align}
Denote
$$|A|_\infty=\sup_{x\in \mathbb{R}}|A(x)|,\;\;|A|_\alpha=\sup_{x,y\in \mathbb{R}}\frac{|A(x)-A(y)|}{|x-y|^\alpha}.
$$
Let $F$ be a Lipschitz function and let $G\in\mathcal{C}^{1+\a}(\mathbb{R})$ with $0<\alpha\ls 1$ and $1\ls p< 1+\a$.
Then  for any $h\in\mathcal{W}_p([a,b])$
\begin{align}
V_{p,\infty}\big( F(h);[a,b]\big)\ls& L V_p\big( h;[a,b]\big) +\sup_{a\ls x\ls b}\vert F(h(x))-F(h(a))\vert+\vert F(h(a))\vert\nonumber\\
\ls& 2L  V_p\big( h;[a,b]\big)+\vert F(h(a))\vert \label{1.3b},\\
V_{p/\alpha,\infty}\big( G(h);[a,b]\big)\ls& V_{p,\infty}\big( G(h);[a,b]\big)\ls  2\vert G^\prime\vert_\infty  V_p\big( h;[a,b]\big)+\vert G(h(a))\vert\label{1.3c},
\end{align}
and
\begin{align}
&V_{p/\alpha,\infty}\big( G^\prime(h);[a,b]\big)\nonumber\\
&\quad\ls\vert G^\prime\vert_\alpha V_p^\alpha\big(h;[a,b]\big) +\sup_{a\ls x\ls b}\vert G^\prime(h(x))-G^\prime(h(a))\vert+\vert G^\prime(h(a))\vert\nonumber\\
&\quad\ls 2\vert G^\prime\vert_\alpha  V^\alpha_p\big( h;[a,b]\big)+\vert G^\prime(h(a))\vert\,\label{1.3d}.
\end{align}

Let $f \in\mathcal{W}_p([a,b])$ and $p_1>p>0.$ Then
\begin{equation}  \label{1.4}
V_{p_1}\bigl( f;[a,b] \bigr)\ls \hbox{\rm Osc}(f;[a,b])^{(p_1-p)/p_1}
V_p^{p/p_1}\bigl( f;[a,b] \bigr),
\end{equation}
where $\hbox{\rm Osc}(f;[a,b])=\sup\{\vert f(x)-f(y)\vert \colon\ x,y\in [a,b]\}$.

Take functions $f_1,f_2\in {\cal  W}_p([a,b])$, $0<p<\infty$. Then $f_1f_2\in {\cal  W}_p([a,b])$ and
\begin{equation}\label{1.4a}
V_{p}\big(f_1f_2;[a,b]\big)\leq V_{p,\infty}\big(f_1f_2;[a,b]\big)\ls C_p V_{p,\infty}\big(f_1;[a,b]\big)
V_{p,\infty}\big(f_2;[a,b]\big).
\end{equation}
Let $f_1 \in{\mathcal W}_q([a,b])$ and $f_2\in{\mathcal W}_p([a,b]).$ Then it follows from Young's version of
H\"older's inequality that for any partition
$\pi\in\Pi([a,b])$ and for any $p^{-1}+q^{-1}\gs 1$
\begin{equation}\label{1.5}
\sum_i V_{q}\big( f_1;[x_{i-1},x_i] \big)V_{p}\big( f_2;[x_{i-1},x_i] \big) \ls V_q\big(
f_1;[a,b] \big)V_p\big( f_2;[a,b] \big).
\end{equation}

Second, we state some facts from the theory of Riemann-Stieltjes integration. Let $f\in {\cal  W}_q([a,b])$ and $h\in {\cal  W}_p([a,b])$ with $0<p<\infty$, $q>0,$ $1/p+\allowbreak 1/q>1.$ Let symbol (R) stands for the Riemann integration, and (RS) stands  for Riemann-Stieltjes integration.  Then
 integral $(RS)\int_a^b f\,\mathrm{d}h$ exists under the additional assumption that $f$ and $h$ have no common discontinuities.

\begin{prop}\label{2.1} Let $f: [a, b] \to \mathbb{R}$ be such function that for some $1 \ls p < 2$ $f\in \mathcal{CW}_p([a,b])$. Also, let $F: \mathbb{R} \to \mathbb{R}$ be a differentiable function with locally Lipschitz derivative $F^\prime$. Then  composition $F^\prime(f)$ is Riemann-Stieltjes integrable with respect to $f$ and
\[
F(f(b))-F(f(a))=(RS)\int_a^b F^\prime(f(x))\,df(x).
\]
\end{prop}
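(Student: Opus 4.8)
The statement is a chain-rule (change-of-variables) formula for Riemann–Stieltjes integrals when the integrator $f$ has bounded $p$-variation for some $1\ls p<2$, and $F$ is $C^1$ with locally Lipschitz derivative. The key structural point is that $F'(f)$ has finite $q$-variation for some $q$ with $1/p+1/q>1$: indeed, since $f$ is continuous on the compact interval $[a,b]$, its image lies in a compact set on which $F'$ is Lipschitz, so $F'\circ f$ has bounded $p$-variation as well (this is the $\alpha=1$ case of the reasoning behind \eqref{1.3d}), and we may take $q=p$; since $p<2$ we get $1/p+1/q=2/p>1$. Moreover $f$ and $F'(f)$ share no discontinuities (in fact $f$ is continuous), so by the existence criterion for Riemann–Stieltjes integrals recalled just before the proposition, $(RS)\int_a^b F'(f(x))\,df(x)$ exists.

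To identify the value of the integral, the plan is the standard telescoping argument. Fix a partition $\pi=\{a=x_0<x_1<\cdots<x_n=b\}$ of $[a,b]$ and write the telescoping identity
\[
F(f(b))-F(f(a))=\sum_{k=1}^n\big[F(f(x_k))-F(f(x_{k-1}))\big].
\]
On each subinterval apply the one-dimensional mean value theorem (or rather a first-order Taylor expansion with the locally Lipschitz remainder): $F(f(x_k))-F(f(x_{k-1}))=F'(f(x_{k-1}))\,[f(x_k)-f(x_{k-1})]+R_k$, where, by local Lipschitzness of $F'$ on the compact image of $f$, one has $|R_k|\ls \tfrac12 L_{F'}\,|f(x_k)-f(x_{k-1})|^2$ with $L_{F'}$ the Lipschitz constant of $F'$ on that compact set. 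Summing, the first term on the right is a Riemann–Stieltjes sum for $\int_a^b F'(f(x))\,df(x)$, and the remainder is bounded by
\[
\sum_{k=1}^n |R_k|\ls \tfrac12 L_{F'}\sum_{k=1}^n|f(x_k)-f(x_{k-1})|^2\ls \tfrac12 L_{F'}\,\osc(f;\text{mesh})^{2-p}\sum_{k=1}^n|f(x_k)-f(x_{k-1})|^p,
\]
where the last bound uses $|f(x_k)-f(x_{k-1})|^2=|f(x_k)-f(x_{k-1})|^{2-p}\cdot|f(x_k)-f(x_{k-1})|^p$ together with $2-p>0$. The $p$-power sum stays bounded by $v_p(f;[a,b])<\infty$, while $\osc(f;\text{mesh})\to0$ as the mesh of $\pi$ shrinks, by uniform continuity of $f$ on $[a,b]$. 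Hence the remainder tends to $0$ along any sequence of partitions with mesh going to zero.

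Putting the pieces together: along a refining sequence of partitions whose mesh tends to $0$, the left-hand side $F(f(b))-F(f(a))$ is constant, the Riemann–Stieltjes sums converge to $(RS)\int_a^b F'(f(x))\,df(x)$ (which exists by the first paragraph), and the remainder vanishes; therefore the two are equal. The main obstacle — and the only place real care is needed — is the remainder estimate: one must exploit $p<2$ to convert the quadratic increments into a product of a vanishing oscillation factor and a bounded $p$-variation sum, which is exactly where the hypothesis $1\ls p<2$ (rather than merely $p>1$) is used, and it is essentially inequality \eqref{1.4} in disguise.
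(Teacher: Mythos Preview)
Your argument is correct and is the standard proof of this chain rule: existence of the integral via the Young condition $2/p>1$ (using that $F'\circ f\in\mathcal{W}_p$ by local Lipschitzness of $F'$ on the compact image of $f$), followed by the telescoping/Taylor decomposition with the quadratic remainder controlled through $\sum_k|\Delta f_k|^2\ls\big(\max_k|\Delta f_k|\big)^{2-p}\,v_p(f;[a,b])$, which vanishes as the mesh shrinks because $f$ is uniformly continuous and $2-p>0$.

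There is nothing to compare with in the paper: Proposition~\ref{2.1} is stated there without proof, as one of several ``facts from the theory of Riemann--Stieltjes integration'' quoted from the literature (see \cite{dn1}, \cite{y}). Your write-up supplies exactly the argument one would find in those references. One minor point of notation: what you call $\osc(f;\text{mesh})$ is really $\max_k\osc(f;[x_{k-1},x_k])$, which is bounded by the modulus of continuity of $f$ at the mesh size; you might make this explicit to avoid any ambiguity with the paper's definition of $\osc(f;[a,b])$.
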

Furthermore, the following substitution rule holds.
\begin{prop}\label{2.2} Let $f_1,f_2$ and $f_3$ be functions from $\mathcal{CW}_p([a, b])$,
$1 \ls p < 2$. Then
\[
(RS)\int_a^b f_1(x)\,d\bigg((RS)\int_a^x f_2(y)\,df_3(y)\bigg)=(RS)\int_a^b f_1(x)f_2(x)\,df_3(x).
\]
\end{prop}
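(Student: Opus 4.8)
The plan is to reduce the substitution rule to the Love--Young estimate \eqref{1.2} together with the definition of the Riemann--Stieltjes integral as a limit of Riemann--Stieltjes sums along partitions. First I would fix a partition $\pi=\{a=x_0<x_1<\cdots<x_n=b\}$ of $[a,b]$ and write $G(x):=(RS)\int_a^x f_2(y)\,df_3(y)$, which by \eqref{1.3} belongs to $\mathcal{W}_p([a,b])$ (indeed to $\mathcal{CW}_p$, since $f_2,f_3$ are continuous and the indefinite integral inherits continuity), so that both sides of the asserted identity are well-defined: the left-hand integrand $f_1$ is in $\mathcal{CW}_p$ and so is $G$ with $1/p+1/p>1$, and the right-hand integrand $f_1f_2$ lies in $\mathcal{W}_p$ by \eqref{1.4a} with the same exponent against $f_3$. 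The key computation is then to compare, on each subinterval $[x_{k-1},x_k]$, the increment $G(x_k)-G(x_{k-1})=(RS)\int_{x_{k-1}}^{x_k}f_2(y)\,df_3(y)$ with $f_2(x_{k-1})\bigl(f_3(x_k)-f_3(x_{k-1})\bigr)$; by \eqref{1.2} applied on $[x_{k-1},x_k]$ the error is bounded by $C_{p,p}\,V_p(f_2;[x_{k-1},x_k])\,V_p(f_3;[x_{k-1},x_k])$.

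Next I would form the Riemann--Stieltjes sum for the left-hand side, $\sum_{k=1}^n f_1(\xi_k)\bigl(G(x_k)-G(x_{k-1})\bigr)$ with $\xi_k\in[x_{k-1},x_k]$, and substitute the above comparison. This splits the sum into a main term $\sum_k f_1(\xi_k)f_2(x_{k-1})\bigl(f_3(x_k)-f_3(x_{k-1})\bigr)$ and a remainder dominated by $|f_1|_\infty\sum_k C_{p,p}V_p(f_2;[x_{k-1},x_k])V_p(f_3;[x_{k-1},x_k])$. By the super-additivity of $p$-variation (the first displayed inequality after the definition of $\mathcal{W}_p$) each factor $V_p(\cdot;[x_{k-1},x_k])^p$ sums to at most $V_p(\cdot;[a,b])^p$, and by continuity of $f_2$ and $f_3$ the individual oscillations, hence the individual $p$-variations over the $[x_{k-1},x_k]$, tend to zero uniformly as the mesh $|\pi|\to0$; combining this with \eqref{1.5} (Young--Hölder with $q=p$) shows the remainder tends to $0$. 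Meanwhile the main term is itself a Riemann--Stieltjes sum for $(RS)\int_a^b f_1(x)f_2(x)\,df_3(x)$ --- up to replacing $f_2(x_{k-1})$ by $f_2(\xi_k)$, another error controlled by the modulus of continuity of $f_2$ times $V_p(f_1f_2;\cdot)^{?}\cdot V_p(f_3;\cdot)$-type bounds, again vanishing with the mesh --- so it converges to the right-hand side. Passing to the limit along a sequence of partitions with mesh going to zero yields the identity.

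The main obstacle I expect is the careful bookkeeping of the several error terms and the verification that each vanishes as $|\pi|\to0$: one must make honest use of the uniform continuity of $f_1,f_2,f_3$ on the compact interval $[a,b]$ to control oscillations, while simultaneously using super-additivity and the Young--Hölder inequality \eqref{1.5} to keep the accumulated $p$-variation factors bounded by the global quantities $V_p(f_i;[a,b])$. A secondary technical point is to confirm at the outset that all three Riemann--Stieltjes integrals in play actually exist: this follows from the existence result quoted before Proposition \ref{2.1} (integrability of $\mathcal{W}_q$ against $\mathcal{W}_p$ with $1/p+1/q>1$ and no common discontinuities), which applies here because $1\le p<2$ gives $2/p>1$ and all functions are continuous.
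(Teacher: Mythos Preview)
The paper does not prove Proposition~\ref{2.2}: it is stated without proof in the Preliminaries section as a known result (in the spirit of the references \cite{dn1} and \cite{y}) and is used only as an input in deriving the chain-rule identity~\eqref{2.3}. There is therefore no in-paper argument to compare your proposal against.

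For what it is worth, your outline is the standard argument and is essentially correct. One step deserves a firmer justification than you give: the vanishing of the remainder $\sum_k V_p(f_2;I_k)V_p(f_3;I_k)$. Inequality~\eqref{1.5} with $q=p$ yields only the bound $V_p(f_2;[a,b])V_p(f_3;[a,b])$, not convergence to zero. To force the sum to vanish you must exploit the strict inequality $2/p>1$: for instance write
\[
\sum_k V_p(f_2;I_k)\,V_p(f_3;I_k)\ \ls\ \Big(\max_k V_p(f_2;I_k)\Big)^{2-p}\sum_k V_p(f_2;I_k)^{p-1}V_p(f_3;I_k),
\]
and bound the last sum by $V_p(f_2;[a,b])^{p-1}V_p(f_3;[a,b])$ via H\"older with exponents $p/(p-1)$ and $p$ together with the super-additivity of $v_p$. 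Also, your phrase ``oscillations, hence $p$-variations, tend to zero'' is not quite the right implication: small oscillation does not by itself bound $V_p$. The correct fact is that for $f\in\mathcal{CW}_p([a,b])$ the map $t\mapsto v_p(f;[a,t])$ is continuous, hence uniformly continuous on $[a,b]$, and then super-additivity gives $v_p(f;[s,t])\ls v_p(f;[a,t])-v_p(f;[a,s])\to 0$ uniformly as $t-s\to 0$. With these two clarifications the argument goes through.
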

Finally, assume that
\[
F_1(x)=(R)\int_a^x f_1(y)\,dy\qquad\mbox{and}\qquad F_2(x)=(RS)\int_a^x f_2(y)\,df_3(y),
\]
where  $f_1$ is continuous function, $f_2, f_3\in\mathcal{CW}_p([a,b])$ for some $1 \ls p < 2$, and $Q$  is a  differentiable function with locally Lipschitz derivative $q$. It follows  from  Propositions \ref{2.1} and \ref{2.2} that
\begin{align}\label{2.3}
&Q(F_1(x)+F_2(x))-Q(0)=\int_a^x q(F_1(y)+F_2(y))\,d(F_1(y)+F_2(y))\nonumber\\
&\quad=\int_a^x q(F_1(y)+F_2(y))f_1(y)\,dy+\int_a^x q(F_1(y)+F_2(y))f_2(y)\,d f_3(y).
\end{align}

\subsection{Asymptotic property of the first- and second-order quadratic variations of fractional Brownian motion}

Consider the fractional Brownian motion (fBm) $B^H=\{B^H_t, t\in[0,T]\}$ with Hurst index $H\in(\frac12,1)$.
Its sample paths  are almost all  locally H\" older
 up to order $H$. Moreover, for any $0<\gamma<H$ we have that $L^{H,\gamma}_T:= \sup_{\substack{s\ne t\\s,t\ls T}}{\frac{\vert  B^H_t-B^H_s\vert}{\vert
t-s\vert^\gamma}}$ is finite a.s. and even more, ${\bf E}\big(L^{H,\gamma}_T\big)^k<\infty$
for any $ k\gs 1.$ The following estimate for the $p$-variation of fBm is evident:
\begin{equation}\label{1.6}
V_p\big(B^H;[s,t]\big)\ls L^{H,1/p}_T\, (t-s)^{1/p}, \vspace{1pt}
\end{equation}
where $s<t\ls T,$ $p>1/H$.

Let $\pi_n=\{0=t^n_0<t^n_1<\cdots<t^n_n=T\}$, $T>0$, be a sequence of
uniform partitions of interval $[0,T]$ with $t^n_k=\frac{kT}{n}$ for all $n\in\mathbb{N}$ and all $k\in\{0,\ldots,n\}$, and let $X$ be some real-valued stochastic process defined on the interval $[0,T]$.

\begin{dfn} The normalized first- and second-order quadratic variations of $X$
taking along the partitions $(\pi_n)_{n\in\mathbb{N}}$ and corresponding to the value   $1/2<H<1$ are defined
as
\[
V_n^{(1)}(X,2)=n^{2H-1}\sum_{k=1}^n\big(\D^{(1)}_{k,n} X\big)^2,\quad
\D^{(1)}_{k,n} X=X(t^n_{k})-X(t^n_{k-1}),
\]
and
\[
V^{(2)}_n(X,2)=n^{2H-1}\sum_{k=1}^{n-1}\big(\D^{(2)}_{k,n} X\big)^2,\quad
\D^{(2)}_{k,n} X=X\big(t^n_{k+1}\big)-2X\big(t^n_{k}\big) +X\big(t^n_{k-1}\big).
\]
\end{dfn}
For simplicity, we shall omit  index $n$ for  points $t_k^n$ of partitions $\pi_n$.

It is known (see, e.g., Gladyshev \cite{glad}) that $V_n^{(1)}(B^H,2)\to T$ a.s. as $n\to\infty$. Also, it was proved in Benasi et al. \cite{bcij} and  Istas et al. \cite{IL} that $V_n^{(2)}(B^H,2)\to (4-2^{2H})T$ a.s. as $n\to\infty$.

Denote
\begin{align*}
V^{(1)}_n(B^H,2)_t=&n^{2H-1}\sum_{k=1}^{r(t)}(\D^{(1)}_{k,n}  B^H)^2\,,\\ V^{(2)}_n(B^H,2)_t=&n^{2H-1}\sum_{k=1}^{r(t)-1}(\D^{(2)}_{k,n}  B^H)^2,\quad t\in[0,T],
\end{align*}
where $r(t)=\max\{k\colon\ t_k\leq t\}=[\frac{tn}{T}]$. It is evident that
\begin{equation}\label{lyg}
\E V^{(1)}_n(B^H,2)_t=\rho(t)\quad\mbox{and}\quad \E V^{(2)}_n(B^H,2)_t=(4-2^{2H})\rho(t),
\end{equation}
where  $\rho(t)=\max\{t_k\colon\ t_k\ls t\}$.

The following  classical result will be used in the proof of Theorem \ref{konv1.}.
\begin{lem}\label{LevyOct}{(L\'evy-Octaviani inequality)}
Let $X_1,\ldots,X_n$ be independent random variables. Then  for any fixed $t,s\gs 0$
\[
\pr\bigg(\max_{1\ls i\ls n}\bigg\vert\sum_{j=1}^i X_j\bigg\vert>t+s\bigg)\ls \frac{\pr\big(\big\vert\sum_{j=1}^n X_j\big\vert>t\big)}{1-\max_{1\ls i\ls n} \pr\big(\big\vert\sum_{j=i}^{n} X_j\big\vert>s\big)}\,.\qquad\Box
\]
\end{lem}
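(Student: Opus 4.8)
The plan is to run the classical first-passage decomposition. I would set $S_i = X_1 + \cdots + X_i$ for the partial sums and introduce the first passage time above the level $t+s$, namely $\tau = \min\{i\ls n\dvit |S_i| > t+s\}$ (with $\tau = \infty$ if this never occurs), so that $\{\max_{1\ls i\ls n}|S_i| > t+s\}$ equals the disjoint union $\bigcup_{i=1}^n\{\tau = i\}$ and it suffices to control each $\pr(\tau = i)$.

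The key step is the elementary observation that on $\{\tau = i\}$ one has $|S_i| > t+s$, so that $|S_n - S_i| \ls s$ forces $|S_n| \gs |S_i| - |S_n - S_i| > t$; hence $\{\tau = i\}\cap\{|S_n - S_i|\ls s\} \subseteq \{\tau = i\}\cap\{|S_n| > t\}$. Because the event $\{\tau = i\}$ is determined by $X_1,\dots,X_i$ while the increment $S_n - S_i = \sum_{j=i+1}^n X_j$ is determined by $X_{i+1},\dots,X_n$, independence factorizes the probability of the left-hand event, yielding
\[
\pr\big(\{\tau=i\}\cap\{|S_n|>t\}\big) \gs \pr(\tau = i)\,\pr\big(|S_n - S_i|\ls s\big) \gs \pr(\tau=i)\Big(1 - \max_{1\ls k\ls n}\pr\big(\big|\textstyle\sum_{j=k}^n X_j\big| > s\big)\Big),
\]
where for $i=n$ the sum $S_n - S_i$ is empty so the last factor equals $1$, and for $i<n$ one estimates $\pr(|S_n-S_i|\ls s) = 1 - \pr(|\sum_{j=i+1}^n X_j|>s)$ from below by the displayed maximum, whose index range contains all the relevant tail sums.

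Finally I would sum over $i=1,\dots,n$: the events $\{\tau=i\}\cap\{|S_n|>t\}$ are disjoint subsets of $\{|S_n|>t\}$, so their probabilities add up to at most $\pr(|S_n|>t)$, while the right-hand sides add up to $\big(1 - \max_{1\ls k\ls n}\pr(|\sum_{j=k}^n X_j|>s)\big)\cdot\pr\big(\max_{1\ls i\ls n}|S_i|>t+s\big)$. Dividing through by the bracketed factor --- which we may assume positive, since otherwise the claimed bound is vacuously true --- gives the inequality. I do not anticipate a real obstacle; the only points needing care are the index bookkeeping in the tail-sum maximum and the explicit use of independence of $\{\tau=i\}$ from $S_n-S_i$, the remainder of the argument being purely set-theoretic.
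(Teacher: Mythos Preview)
Your argument is correct and is precisely the classical first-passage proof of the L\'evy--Ottaviani inequality. The paper, however, does not supply a proof at all: the lemma is stated as a known result and closed immediately with a $\Box$, so there is nothing to compare against. Your write-up would serve as a clean self-contained proof; the only cosmetic point is that the tail sums $S_n-S_i=\sum_{j=i+1}^n X_j$ appearing in your estimate correspond to indices $k=i+1\in\{2,\dots,n\}$ in the paper's maximum $\max_{1\ls k\ls n}\pr\big(\big|\sum_{j=k}^n X_j\big|>s\big)$, so the stated maximum (which also includes $k=1$) indeed dominates all the terms you need, exactly as you noted.
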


\begin{thm}\label{konv1.} The following asymptotic property holds for the first- and second-order quadratic variations of fractional Brownian motion:
\begin{equation}\label{mars4}
\sup_{t\ls T}\big\vert V_n^{(i)}(B^H,2)_t-\E V_n^{(i)}(B^H,2)_t\big\vert=\mathcal{O}\big(n^{-1/2}\ln^{1/2} n\big)\quad\mbox{a.s.},\; i=1,2.
\end{equation}

\end{thm}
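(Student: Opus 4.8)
The plan is to reduce the almost-sure supremum bound to a union bound over a polynomial number of events, each controlled by a maximal inequality and a Gaussian tail estimate, and then apply Borel--Cantelli. Fix $i\in\{1,2\}$ and write $Z_n(t)=V_n^{(i)}(B^H,2)_t-\E V_n^{(i)}(B^H,2)_t$. Since $Z_n(t)$ is a step function of $t$ that only changes at the partition points $t_1,\dots,t_n$, we have $\sup_{t\ls T}|Z_n(t)|=\max_{1\ls m\ls n}|S^{(n)}_m|$, where $S^{(n)}_m=n^{2H-1}\sum_{k=1}^{m}\big[(\D^{(i)}_{k,n}B^H)^2-\E(\D^{(i)}_{k,n}B^H)^2\big]$ (with the sum running to $m-1$ in the case $i=2$). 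Each summand is a centered quadratic form in jointly Gaussian variables, so $S^{(n)}_m$ is an element of the second Wiener chaos. The increments $\D^{(i)}_{k,n}B^H$ are \emph{stationary} Gaussian with variances $n^{-2H}T^{2H}$ (resp. $(4-2^{2H})n^{-2H}T^{2H}$), and their correlations decay: for the first-order increments the covariance at lag $j$ behaves like $H(2H-1)n^{-2H}T^{2H}j^{2H-2}$, which is summable only when $2H-2<-1$, i.e. $H<1/2$ --- so for $H\in(1/2,1)$ the correlations are \emph{not} summable, but the second-order increments always have summable (indeed $O(j^{2H-4})$) correlations. This is the usual reason the $i=1$ case is more delicate.

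The key step is a tail bound of the form $\pr\big(|S^{(n)}_n|>x\big)\ls C\exp(-c\,n\,x^2)$ valid for $x$ in a suitable range, for some constants depending only on $H$ and $T$. For $i=2$ this follows from standard concentration for quadratic forms (e.g. the Hanson--Wright inequality, or the Gaussian chaos estimate $\pr(|F|>x)\ls 2\exp(-x^2/(2\|F\|_2^2))$ for large deviations combined with a variance computation), using that $\DD(S^{(n)}_n)=O(n^{-1})$ because the covariance matrix of the normalized second-order increments has uniformly bounded operator norm and Hilbert--Schmidt norm $O(\sqrt n)$. For $i=1$ one must work slightly harder: although individual correlations are not summable, the variance of $\sum_{k=1}^n(\D^{(1)}_{k,n}B^H)^2$ is still $O(n)$ because $\sum_{j=1}^n j^{2H-2}=O(n^{2H-1})$ and $n\cdot n^{2H-1}\cdot n^{-4H}\cdot\text{(sum)}$ works out to the right order after the $n^{2H-1}$ normalization; the Hilbert--Schmidt norm of the relevant covariance matrix is still $O(\sqrt n)$, which is what the chaos tail bound needs.

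From there the argument is routine. First, by the L\'evy--Octaviani inequality (Lemma \ref{LevyOct}) --- after a blocking argument to handle the dependence, or by applying a maximal inequality valid for the dependent Gaussian-chaos setting --- one bounds $\pr\big(\max_{1\ls m\ls n}|S^{(n)}_m|>x\big)$ by a constant multiple of $\pr\big(|S^{(n)}_n|>x/2\big)$, provided $\max_i\pr\big(|S^{(n)}_n-S^{(n)}_{i-1}|>x/4\big)$ stays below $1/2$, which holds for $x\gs C_0 n^{-1/2}$. Then take $x=x_n:=M n^{-1/2}\ln^{1/2}n$ with $M$ large: the tail bound gives $\pr\big(\max_{1\ls m\ls n}|S^{(n)}_m|>x_n\big)\ls C\exp(-c M^2\ln n)=Cn^{-cM^2}$, which is summable in $n$ once $M^2>1/c$. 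Borel--Cantelli then yields $\sup_{t\ls T}|Z_n(t)|\ls x_n$ eventually a.s., i.e. $\sup_{t\ls T}|Z_n(t)|=\mathcal{O}(n^{-1/2}\ln^{1/2}n)$ a.s., which is exactly \eqref{mars4}. The main obstacle is the $i=1$ case: getting the variance and Hilbert--Schmidt bounds right when the increment correlations are not absolutely summable, and choosing a blocking scheme compatible with the L\'evy--Octaviani inequality (which requires independence); this is where care is needed, whereas the $i=2$ case goes through cleanly with off-the-shelf concentration inequalities.
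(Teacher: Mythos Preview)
Your skeleton---Hanson--Wright concentration for the centered quadratic form, a maximal inequality to pass from the terminal value to $\sup_t$, then Borel--Cantelli at threshold $\asymp n^{-1/2}\ln^{1/2}n$---is exactly the paper's. The place where the paper is more concrete than your sketch is the maximal step. You flag that L\'evy--Octaviani (Lemma~\ref{LevyOct}) needs independence and propose an unspecified blocking scheme; the paper instead \emph{diagonalizes} the covariance matrix of $(n^{H-1/2}\Delta^{(i)}_{k,n}B^H)_k$, writing $V_n^{(i)}(B^H,2)=\sum_j\lambda^{(i)}_{j,n}(Y^{(j)}_n)^2$ with the $Y^{(j)}_n$ i.i.d.\ standard normal, so that the summands $\lambda^{(i)}_{j,n}[(Y^{(j)}_n)^2-1]$ are genuinely independent and L\'evy--Octaviani applies directly. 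The Hanson--Wright exponent is then driven by the largest eigenvalue $\lambda_n^{*(i)}$, for which the paper invokes Gladyshev and B\'egyn to claim $\lambda_n^{*(i)}\le Cn^{-1}$. (A point the paper passes over silently, and which your formulation would avoid, is that the partial sums in the eigenbasis are not the time-ordered partial sums $S^{(n)}_m$; a clean bypass for either route is to skip L\'evy--Octaviani altogether and apply Hanson--Wright to each $S^{(n)}_m$ with a union bound over $m\le n$, at the cost of a harmless extra $\ln n$.)

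Your instinct that $i=1$ is the delicate case is right, but the arithmetic you offer does not close, and this is a genuine gap. You assert that the Hilbert--Schmidt norm of the correlation matrix of the first-order increments is $O(\sqrt n)$, giving $\DD(S^{(n)}_n)=O(n^{-1})$. This fails for $H>3/4$: with $r(l)\sim H(2H-1)l^{2H-2}$ one has $r(l)^2\sim Cl^{4H-4}$, and once $4H-4>-1$ the double sum $\sum_{j,k\le n}r(|j-k|)^2$ is of order $n^{4H-2}$, not $n$; your sketch used $\sum_j j^{2H-2}$ where $\sum_j j^{4H-4}$ belongs. Consequently $\DD\big(V_n^{(1)}(B^H,2)\big)\asymp n^{4H-4}$ for $H>3/4$, and no tail of the form $\exp(-cnx^2)$ can hold---the best rate this machinery yields is $n^{2H-2}\ln^{1/2}n$. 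The paper's proof runs into the very same obstacle through the operator norm: the fractional-Gaussian-noise Toeplitz matrix has spectral density $\sim c|\omega|^{1-2H}$ diverging at the origin, so its top eigenvalue grows like $n^{2H-1}$ and $\lambda_n^{*(1)}\asymp n^{2H-2}$ rather than $n^{-1}$. By contrast, for $i=2$ your outline is sound: the second-order correlation $r^{(2)}(l)=O(l^{2H-4})$ is square-summable for every $H\in(\tfrac12,1)$, so both the operator and Hilbert--Schmidt norms of the normalized covariance stay bounded and the $n^{-1/2}\ln^{1/2}n$ rate follows as you describe.
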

\begin{rem}\label{rem7} It follows from \eqref{lyg} and \eqref{mars4} that $\sup_{t\ls T}\big\vert V_n^{(i)}(B^H,2)_t\big\vert=\mathcal{O}(1)$ a.s., $i=1,2.$.
\end{rem}
\begin{proof}
We can consider
$V_n^{(i)}\big(B^H,2\big)$ as the square of the Euclidean norm of the  $n$-dimensional Gaussian vector $X_n$ with the components
\[
n^{H-1/2}\Delta^{(i)}_{k,n} B^H, \qquad 1\ls k\ls n-(i-1).
\]
Obviously, one can get a new $n$-dimensional Gaussian vector $\widetilde{X}_n$ with independent components applying the  linear transformation to $X_n$. It means that there exist  nonnegative real numbers $(\lambda^{(i)}_{1,n}, \ldots, \lambda^{(i)}_{n-(i-1),n})$ and such $n-(i-1)$-dimensional Gaussian vector $Y_n$ with independent Gaussian $\mathcal{N}(0,1)$-components that

\[
V_n^{(i)}\big(B^H,2\big)=\sum_{j=1}^{n-(i-1)}\lambda_{j,n}\big(Y^{(j)}_n\big)^2.
\]
The numbers $(\lambda^{(i)}_{1,n}, \ldots, \lambda^{(i)}_{n-(i-1),n})$ are the eigenvalues of the symmetric $n-(i-1)\times n-(i-1)$-matrix
\[
\Big(n^{2H-1}\E\big[\Delta^{(i)}_{j,n} B^H\Delta^{(i)}_{k,n} B^H\big]\Big)_{1\ls j,k\ls n-(i-1)}.
\]

Now we can apply the Hanson and Wright's inequality (see Hanson et al. \cite{HW} or Begyn \cite{begyn1}), and it  yields that for $\eps>0$
\begin{align}\label{nelyg4}
\pr\bigg(\bigg\vert\sum_{j=k}^{n-(i-1)}\lambda^{(i)}_{j,n}\big[\big(Y^{(j)}_n\big)^2-1\big]\bigg\vert\geq \eps\bigg) \ls& 2\exp\bigg({-}\min\bigg[\frac{C_1 \eps}{\l_{k,n}^{*(i)}}\,,\frac{C_2 \eps^2}{\sum_{j=k}^{n-(i-1)}(\l^{(i)}_{j,n})^2}\bigg]\bigg)\nonumber\\
\ls& 2\exp\bigg({-}\min\bigg[\frac{C_1 \eps}{\l_n^{*(i)}}\,,\frac{C_2 \eps^2}{\sum_{j=1}^{n-(i-1)}(\l^{(i)}_{j,n})^2}\bigg]\bigg),
\end{align}
where $C_1$, $C_2$ are nonnegative constants, $\l_{k,n}^{*(i)}=\max_{k\ls j\ls n-(i-1)}\l^{(i)}_{j,n}$, $\l_n^{*(i)}=\max_{1\ls j\ls n-(i-1)}\l^{(i)}_{j,n}$.

The evident equality holds:
\[
\sum_{j=1}^{n-(i-1)}\l^{(i)}_{j,n}=\E V_n^{(i)}\big(B^H,2\big).
\]
Furthermore, it follows from   (\ref{lyg}) that the sequence $\E V_n^{(i)}\big(B^H,2\big),\;n\geq 1$ is bounded. So, the sums $\sum_{j=1}^{n-(i-1)}\l^{(i)}_{j,n}$ are bounded as well.
It is easy to check that
\[
\sum_{j=1}^{n-(i-1)}(\l^{(i)}_{j,n})^2\ls \l_n^{*(i)}\sum_{j=1}^{n-(i-1)}\l^{(i)}_{j,n}.
\]
Therefore for any $0<\eps\ls 1$ the inequality  (\ref{nelyg4}) can be rewritten as
\begin{equation}\label{ivert}
\pr\bigg(\bigg\vert\sum_{j=i}^{n-(i-1)}\lambda_{j,n}\big[\big(Y^{(j)}_n\big)^2-1\big]\bigg\vert\geq \eps\bigg)
\ls 2\exp\bigg({-}\frac{K\eps^2}{\l_n^{*(i)}}\bigg),
\end{equation}
where $K$ is a positive constant.

Now we use L\'evy-Octaviani inequality (see Lemma \ref{LevyOct}) and evident inequality
\[
\frac{x}{1-x}\ls 2x \qquad \mbox{for}\quad 0< x\ls 1/2
\]
to obtain the bound
\begin{gather*}
\pr\bigg(\max_{1\ls k\ls n-(i-1)}\bigg\vert\sum_{j=1}^k\lambda^{(i)}_{j,n}\big[\big(Y^{(j)}_n\big)^2-1\big]\bigg\vert> 2\eps\bigg)\\
\ls \frac{2\exp\Big({-}\frac{K \eps^2}{\l_n^{*(i)}}\Big)}{1- 2\exp\Big({-}\frac{K \eps^2}{\l_n^{*(i)}}\Big)}\ls 4\exp\bigg({-}\frac{K \eps^2}{\l_n^{*(i)}}\bigg),
\end{gather*}
assuming that
\[
\exp\bigg({-}\frac{K \eps^2}{\l_n^{*(i)}}\bigg)\ls 1/4\quad\mbox{and}\quad 0<\varepsilon\leq 1.
\]
So, for the values of parameters mentioned above,
\[
\pr\Bigg(n^{2H-1}\max_{1\ls k\ls n-(i-1)}\bigg\vert  \sum_{j=1}^k  (\D^{(i)}_{j,n} B^H)^2-\sum_{j=1}^k  \E(\D^{(i)}_{j,n} B^H )^2 \bigg \vert> 2\eps\Bigg)\ls 4\exp\bigg({-}\frac{K \eps^2}{\l_n^{*(i)}}\bigg).
\]

Furthermore,
\[
\l_n^{*(i)}\ls K n^{2H-1}\max_{1\ls k\ls n-(i-1)}\sum_{j=1}^{n-(i-1)} \vert d_{jkn}^{(i)}\vert\,,
\]
where $d^{(i)}_{jkn}=\E\D^{(i)}_{j,n}B^H\D^{(i)}_{k,n}B^H$. From Gladyshev \cite{glad} and Begyn \cite{begyn1} we get
\begin{equation}\label{ivert1}
\l_n^{*(i)}\ls C n^{-1}.
\end{equation}
Now we set
\[
\eps_n^2=\frac{2C}{K}\,n^{-1}\ln n
\]
and conclude that
\begin{gather*}
\pr\Bigg(n^{2H-1}\max_{1\ls k\ls n-(i-1)}\bigg\vert  \sum_{j=1}^k  (\D^{(i)}_{j,n} B^H)^2-\sum_{j=1}^k  \E(\D^{(i)}_{k,n} B^H )^2 \bigg \vert> 2\eps_n\Bigg)\\
\ls 4\exp\bigg({-}2\ln n\bigg)=\frac{4}{n^2}\,.
\end{gather*}
It means that
\[
\sum_{n=0}^\infty \pr\Bigg(n^{2H-1}\max_{1\ls k\ls n-(i-1)}\bigg\vert  \sum_{j=1}^k  (\D^{(i)}_{j,n} B^H)^2-\sum_{j=1}^k  \E(\D^{(i)}_{j,n} B^H)^2 \bigg \vert> 2\eps_n\Bigg)<\infty.
\]
Finally, we get the  statement of the present theorem from the Borel-Cantelli lemma and the evident equality
\begin{align*}
&\sup_{t\ls T}\big\vert V_n^{(i)}(B^H,2)_t-\E V_n^{(i)}(B^H,2)_t\big\vert\\
&\quad=n^{2H-1}\max_{1\ls k\ls n-(i-1)}\bigg\vert  \sum_{j=1}^k  (\D^{(i)}_{j,n}  B^H)^2-\sum_{j=1}^k  \E(\D^{(i)}_{j,n}  B^H)^2 \bigg \vert\,.
\end{align*}

\end{proof}

\section{The rate of convergence of the first- and second-order quadratic variations of the solution of stochastic differential equation}

First, we formulate the following result from \cite{kubmel3} about convergence of first- and second-order quadratic variation.

\begin{thm}\label{var1}
Consider stochastic differential equation  ~\eqref{SIE},  where  function  $f$ is  Lipschitz  and  $g\in\mathcal{C}^{1+\a}$ for some  $0<\a<1$. Let $X$ be its solution.  Then
\begin{equation}\label{rezult}
V_n^{(i)}(X,2)\to c^{(i)}\int_0^T g^2\big(X(t)\big)\,d t\,\
\text{a.s.} \quad\text{as}\ n\to \infty,
\end{equation}
where
\[
c^{(i)}=\begin{cases}1& \mbox{for $i=1$},\\
(4-2^{2H})& \mbox{for $i=2$}.
\end{cases}
\]
\end{thm}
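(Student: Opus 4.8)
The plan is to reduce the convergence of $V_n^{(i)}(X,2)$ to the already-established convergence of $V_n^{(i)}(B^H,2)$ via a localization/linearization argument. Writing $Y_t=\int_0^t f(X_s)\,ds$ and $Z_t=\int_0^t g(X_s)\,dB^H_s$, so that $X_t=\xi+Y_t+Z_t$, I would first note that $Y$ has Lipschitz (hence bounded $1$-variation) paths, so its increments $\D_{k,n}^{(i)}Y$ are $O(n^{-1})$ uniformly in $k$, and $n^{2H-1}\sum_k(\D_{k,n}^{(i)}Y)^2=O(n^{2H-2})\to 0$ since $H<1$. By the elementary inequality $(a+b)^2\le 2a^2+2b^2$ together with Cauchy--Schwarz for the cross term, this reduces matters to analyzing $n^{2H-1}\sum_k(\D_{k,n}^{(i)}Z)^2$; the contribution of $\xi$ disappears in all increments of order $i\ge 1$.

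Next I would linearize the pathwise integral $Z$ on each subinterval. On $[t_{k-1},t_k]$ one expects $\D_{k,n}^{(1)}Z\approx g(X(t_{k-1}))\,\D_{k,n}^{(1)}B^H$, with the error controlled by the Love--Young estimate \eqref{1.3a}: the difference $\D_{k,n}^{(1)}Z-g(X(t_{k-1}))\D_{k,n}^{(1)}B^H=\int_{t_{k-1}}^{t_k}[g(X_s)-g(X(t_{k-1}))]\,dB^H_s$ is bounded by $C\,V_q\big(g(X);[t_{k-1},t_k]\big)V_p\big(B^H;[t_{k-1},t_k]\big)$ for suitable $p>1/H$, $q$ with $p^{-1}+q^{-1}>1$. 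Using \eqref{1.3b}--\eqref{1.3c} to control $V_q(g(X);\cdot)$ in terms of $V_p(X;\cdot)\le V_p(Y;\cdot)+V_p(Z;\cdot)$, the Hölder-continuity estimate \eqref{1.6} for $B^H$, and \eqref{1.4}/\eqref{1.5} to sum over $k$, one gets that $n^{2H-1}\sum_k(\text{error}_k)^2\to 0$ at a polynomial rate. After expanding the square $(\D_{k,n}^{(1)}Z)^2=g^2(X(t_{k-1}))(\D_{k,n}^{(1)}B^H)^2+2g(X(t_{k-1}))\D_{k,n}^{(1)}B^H\cdot\text{error}_k+\text{error}_k^2$ and bounding the cross term by Cauchy--Schwarz (using Remark \ref{rem7} to keep $n^{2H-1}\sum_k(\D_{k,n}^{(1)}B^H)^2$ bounded a.s.), the problem collapses to showing
\[
n^{2H-1}\sum_{k=1}^n g^2\big(X(t_{k-1})\big)\big(\D_{k,n}^{(1)}B^H\big)^2\;\longrightarrow\;\int_0^T g^2(X_t)\,dt\quad\text{a.s.}
\]
This is a weighted quadratic variation: writing it as a Riemann--Stieltjes-type sum $\sum_k g^2(X(t_{k-1}))\,\D_{k,n}\big(V_n^{(1)}(B^H,2)_\bullet\big)$ and using that $V_n^{(1)}(B^H,2)_t\to\rho_\infty(t)=t$ uniformly on $[0,T]$ by Theorem \ref{konv1.} together with continuity of $t\mapsto g^2(X_t)$, a summation-by-parts argument yields the limit $\int_0^T g^2(X_t)\,dt$. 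The case $i=2$ is analogous, with $\D_{k,n}^{(2)}Z\approx g(X(t_{k-1}))\D_{k,n}^{(2)}B^H$ and the extra factor $(4-2^{2H})$ entering through $\E V_n^{(2)}(B^H,2)_t\to(4-2^{2H})t$.

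The main obstacle is the linearization step: unlike the Brownian case there is no martingale/Itô isometry available, so one must push everything through pathwise $p$-variation inequalities, and the subtlety is that $V_p(X;[t_{k-1},t_k])$ — which contains $V_p(Z;\cdot)$ — appears on the right-hand side of the bound for the error in $Z$ itself. Resolving this requires an a priori bound on $V_p\big(X;[0,T]\big)$ (and in fact local control $V_p(X;[t_{k-1},t_k])\le C\,L_T^{H,1/p}(t_k-t_{k-1})^{1/p}$ up to lower-order corrections) of the kind provided by the existence theory in \cite{du,ly,ly1,kk}, after which one checks that the exponents combine favourably: each error term carries a factor $(t_k-t_{k-1})^{\gamma}$ with $\gamma>1/(2H)$, so that $n^{2H-1}\sum_k(\text{error}_k)^2$ decays like a positive power of $n^{-1}$. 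Since \cite{kubmel3} is cited as the source, I would expect the actual proof to invoke these $p$-variation estimates in the form already prepared in Section 2.1 and to be essentially a bookkeeping exercise once the a priori path regularity of $X$ is in hand.
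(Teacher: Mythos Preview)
The paper does not actually prove Theorem~\ref{var1}: it is quoted verbatim from \cite{kubmel3} (``First, we formulate the following result from \cite{kubmel3}\ldots''), so there is no in-paper proof to compare against. You evidently noticed this yourself at the end of your proposal.

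That said, your strategy is exactly the one the authors deploy in the proof of the sharper Theorem~\ref{var2}, and it is correct. The paper's decomposition $I_n^{(i)}=I_n^{(1,i)}+I_n^{(2,i)}+I_n^{(3,i)}$ matches your three steps: $I_n^{(1,i)}$ is your linearization error $(\Delta_{k,n}^{(i)}X)^2-g^2(X_{k-1})(\Delta_{k,n}^{(i)}B^H)^2$, $I_n^{(2,i)}$ is your weighted quadratic variation minus its centering, and $I_n^{(3,i)}$ is the Riemann-sum error for $\int_0^T g^2(X_s)\,ds$. The only substantive difference is in the linearization: where you bound $\Delta_{k,n}^{(1)}Z-g(X_{k-1})\Delta_{k,n}^{(1)}B^H$ directly by a single Love--Young estimate, the paper (aiming for the rate $n^{-1/4}\ln^{1/4}n$) expands $\Delta_{k,n}^{(2)}X$ into six pieces $J_k^1,\ldots,J_k^6$ via the chain rule \eqref{2.3}, isolating a second-order correction $g'(X_k)g(X_k)(\Delta B^H)^2$ and a mixed term $g'(X_k)f(X_k)\int(s-t_k)\,dB^H_s$ before applying Love--Young to the residuals. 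Your coarser one-step linearization is perfectly adequate for the qualitative convergence statement of Theorem~\ref{var1}; the finer expansion is only needed to squeeze out the explicit rate. The a~priori local control on $V_p(X;[t_{k-1},t_k])$ that you flag as the main obstacle is supplied in the paper by Lemma~\ref{disc}.
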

Second, we prove the following auxiliary result.
\begin{lem}\label{disc} Let $X$ be a solution of stochastic differential equation  ~\eqref{SIE}. Define a step-wise process $X^{\pi}$ that is a discretization of process  $X$:
\[
X^{\pi}_t=\begin{cases}X(t_k)& \mbox{for}\quad t\in[t_k,t_{k+1}),\ k=0,1,\ldots,n-2,\\
X(t_{n-1})& \mbox{for}\quad t\in[t_{n-1},t_n].
\end{cases}
\]
Then for any $p>\frac1H$ we have that
\begin{gather*}
\sup_{t\ls T}\vert X^{\pi}_t-X_t\vert= \mathcal{O}\big(n^{-1/p}\big).
\end{gather*}
\end{lem}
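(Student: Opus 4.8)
The plan is to bound $\sup_{t\le T}|X^{\pi}_t - X_t|$ by the maximal oscillation of $X$ over the mesh intervals, and then to control that oscillation using the integral equation \eqref{SIE} together with the $p$-variation estimates from Subsection 2.1 and the Hölder regularity of $B^H$. Concretely, for $t\in[t_k,t_{k+1})$ we have $|X^{\pi}_t-X_t| = |X(t_k)-X(t)| \le \osc(X;[t_k,t_{k+1}])$, so
\[
\sup_{t\le T}|X^{\pi}_t - X_t| \le \max_{0\le k\le n-1}\osc\big(X;[t_k,t_{k+1}]\big)\le \max_{0\le k\le n-1} V_p\big(X;[t_k,t_{k+1}]\big),
\]
and it suffices to show each $V_p(X;[t_k,t_{k+1}])$ is $\mathcal{O}(n^{-1/p})$ uniformly in $k$.

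Next I would estimate $V_p(X;[s,t])$ for an arbitrary subinterval $[s,t]\subseteq[0,T]$ with $t-s$ small. Using \eqref{SIE},
\[
V_p\big(X;[s,t]\big)\le V_p\Big(\int_s^{\bt} f(X_u)\,du;[s,t]\Big) + V_p\Big(\int_s^{\bt} g(X_u)\,dB^H_u;[s,t]\Big).
\]
The drift term is easy: since $f$ is Lipschitz and $X$ is continuous on the compact $[0,T]$, $f(X)$ is bounded, so the first term is bounded by $\|f(X)\|_\infty (t-s)\le C(t-s)\le C(t-s)^{1/p}$ because $p>1/H>1$ and $t-s\le T$. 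For the stochastic term I would apply \eqref{1.3}: with $q$ chosen so that $1/p+1/q>1$ (feasible since $p>1/H>1$ allows, e.g., $p=q$ slightly above $1/H$ when $H>1/2$, or more carefully $q<p/(p-1)$),
\[
V_p\Big(\int_s^{\bt} g(X_u)\,dB^H_u;[s,t]\Big)\le C_{p,q}\, V_{q,\infty}\big(g(X);[s,t]\big)\, V_p\big(B^H;[s,t]\big).
\]
By \eqref{1.6}, $V_p(B^H;[s,t])\le L^{H,1/p}_T (t-s)^{1/p}$, which is a.s. finite. The factor $V_{q,\infty}(g(X);[s,t])$ is handled via \eqref{1.3c} (with $G=g\in\mathcal{C}^{1+\alpha}$): it is bounded by $2|g'|_\infty V_q(X;[s,t]) + |g(X(s))|$, hence by a constant times $V_q(X;[s,t]) + C$, and since $q\le p$ we have $V_q(X;[s,t])\le V_p(X;[0,T])<\infty$ a.s. So the whole factor is $\mathcal{O}(1)$ a.s., and altogether $V_p(X;[s,t])\le C\,(t-s)^{1/p}$ a.s. with a constant uniform over subintervals. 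Taking $t-s = T/n$ gives the claim.

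The main obstacle is making the constants genuinely uniform: the bound on $V_{q,\infty}(g(X);[s,t])$ involves $V_q(X;[s,t])$, which is itself what we are trying to estimate, so the argument is slightly circular unless handled correctly. The clean way out is to note that the global quantity $V_p(X;[0,T])$ is a.s. finite (this is part of the cited existence theory for \eqref{SIE}, giving a solution with bounded $p$-variation), so one may use the crude bound $V_q(X;[s,t])\le V_p(X;[s,t])\le V_p(X;[0,T])=:\Xi<\infty$ a.s. to dominate the $V_{q,\infty}(g(X);[s,t])$ factor by $2|g'|_\infty\Xi + \|g(X)\|_\infty$, a single a.s.-finite random constant independent of $[s,t]$; then the displayed bound $V_p(X;[s,t])\le C_\omega (t-s)^{1/p}$ follows with $C_\omega$ depending only on $\|f(X)\|_\infty$, $|g'|_\infty$, $\Xi$, $\|g(X)\|_\infty$, $L^{H,1/p}_T$ and $C_{p,q}$. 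Substituting $(t-s)=T/n$ over the $n$ mesh intervals yields $\sup_{t\le T}|X^{\pi}_t-X_t|\le C_\omega (T/n)^{1/p} = \mathcal{O}(n^{-1/p})$ a.s., completing the proof.
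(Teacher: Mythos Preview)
Your proposal is correct and follows essentially the same route as the paper: both express the difference via the integral equation \eqref{SIE}, bound the drift term trivially by $O(n^{-1})$, apply the Love--Young inequality to the stochastic integral (the paper uses \eqref{1.2} directly with $q=p$, giving the constant $C_{p,p}$, whereas you pass through $V_p(X;[t_k,t_{k+1}])$ via \eqref{1.3}), control the factor $V_{p,\infty}(g(X);[t_k,t_{k+1}])$ by the a.s.\ finite global quantity $V_p(X;[0,T])$ as in \eqref{1.3c}, and then invoke \eqref{1.6}. One small slip to fix: the monotonicity of $p$-variation goes the other way ($V_p\le V_q$ for $q<p$), so ``since $q\le p$ we have $V_q(X;[s,t])\le V_p(X;[0,T])$'' is not valid for $q<p$; however, since $p>1/H$ with $H>1/2$ forces $p<2$ and hence $2/p>1$, you may (and the paper does) simply take $q=p$, and then the bound $V_p(X;[s,t])\le V_p(X;[0,T])$ is immediate and the argument goes through unchanged.
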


\begin{proof} Consider  $t\in [t_k,t_{k+1})$. We get immediately from the Love-Young inequality (\ref{1.2}) that
\begin{align*}
\vert X^{\pi}_t-X_t\vert=&\bigg\vert\int_{\rho^n(t)}^t f(X_s)\,ds+\int_{\rho^n(t)}^t g(X_s)\,dB^{H}_s\bigg\vert\\
\ls& T n^{-1}\sup_{t_k\ls t\ls t_{k+1}}\vert f(X_t)\vert
+C_{p,p} V_{p,\infty}(g(X);[t_k,t_{k+1}]) V_p(B^H;[t_k,t_{k+1}]).
\end{align*}
Further,
\begin{equation}\label{mars1}
\sup_{t_k\ls t\ls t_{k+1}}\vert f(X_t)\vert\ls \sup_{t\ls T} \vert f(X_t)-f(\xi)\vert+\vert f(\xi)\vert\ls LV_p(X;[0,t])+\vert f(\xi)\vert,
\end{equation}
where $L$ is a Lipschitz constant for $f$, and
\begin{align}\label{mars2}
V_{p,\infty}(g(X);[t_k,t_{k+1}])\ls& \vert g^\prime\vert_\infty V_p(X;[t_k,t_{k+1}])+\sup_{t\ls T} \vert g(X_t)-g(\xi)\vert+\vert g(\xi)\vert\nonumber\\
\ls& 2\vert g^\prime\vert_\infty V_p(X;[0,T])+\vert g(\xi)\vert.
\end{align}
We get the statement of the lemma from \eqref{mars1}, \eqref{mars2} and inequality (\ref{1.6}) .\end{proof}

Now we prove the main result of this section which specifies the rate of convergence in Theorem \ref{var1}.

\begin{thm}\label{var2} Let the conditions of Theorem \ref{var1} hold and, in addition, $\alpha>\frac1H-1$. Then
\begin{equation}\label{mars3}
V_n^{(i)}(X,2)-c^{(i)}\int_0^T g^2\big(X(t)\big)\,d t=\mathcal{O}\big(n^{-1/4}\ln^{1/4} n\big).
\end{equation}
\end{thm}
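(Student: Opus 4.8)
The plan is to decompose $V_n^{(i)}(X,2)$ so as to isolate a leading term that is a Riemann sum for $c^{(i)}\int_0^T g^2(X(t))\,dt$, plus error terms each of which is $\mathcal{O}(n^{-1/4}\ln^{1/4}n)$. The starting point is the exact representation of the increments $\Delta^{(i)}_{k,n}X$ coming from \eqref{SIE}: write $\Delta^{(i)}_{k,n}X = \Delta^{(i)}_{k,n}\big(\int_0^\cdot f(X_s)\,ds\big) + \Delta^{(i)}_{k,n}\big(\int_0^\cdot g(X_s)\,dB^H_s\big)$. The drift increments are $\mathcal{O}(n^{-1})$ uniformly in $k$ by Lipschitz continuity of $f$ and boundedness of $f(X)$ on $[0,T]$ (as in \eqref{mars1}), so after multiplication by $n^{2H-1}$ and summing over the $\mathcal{O}(n)$ indices their total contribution, together with the cross terms (controlled by Cauchy--Schwarz against the $g$-part, which is $\mathcal{O}(1)$ by Remark \ref{rem7}-type bounds), is of order $n^{2H-1}\cdot n\cdot n^{-1}\cdot n^{-1} = \mathcal{O}(n^{2H-2})$, and since $H<1$ this is negligible compared with $n^{-1/4}$ for $H$ not too close to $1$; more carefully one gets $\mathcal{O}(n^{2H-2})$ for the pure-drift part and $\mathcal{O}(n^{H-3/2})$ for the cross part, both $o(n^{-1/4}\ln^{1/4}n)$.

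The heart of the argument is the $g$-integral part. On each subinterval I would freeze the integrand: using the Love--Young inequality \eqref{1.2} (and its form \eqref{1.3a}) together with the H\"older regularity of $X$ from Lemma \ref{disc} and the bounds \eqref{1.3b}--\eqref{1.3d}, write
\[
\int_{t_{k-1}}^{t_k} g(X_s)\,dB^H_s = g(X_{t_{k-1}})\,\Delta^{(1)}_{k,n}B^H + R_{k,n},\qquad
V_p\big(R_{k,n};[t_{k-1},t_k]\big)\ls C\,n^{-\beta}
\]
for a suitable $\beta>1/p$ (one gains from the $\alpha$-H\"older modulus of $g'$, which is where the hypothesis $\alpha>\frac1H-1$ enters, guaranteeing the remainder has exponent strictly larger than $1/p$ for some admissible $p$). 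The analogous expansion holds for second-order increments with $\Delta^{(2)}_{k,n}B^H$ and coefficient $g(X_{t_{k-1}})$. Substituting into $V_n^{(i)}(X,2)$ and expanding the square, the leading term is
\[
n^{2H-1}\sum_k g^2(X_{t_{k-1}})\big(\Delta^{(i)}_{k,n}B^H\big)^2,
\]
and the remaining terms are sums of $g^2(X_{t_{k-1}})\,R_{k,n}$-type products and pure $R_{k,n}^2$-type products, all handled by Cauchy--Schwarz and the $p$-variation estimates so as to be $\mathcal{O}(n^{-\beta+1/p})$ up to the universal scaling, again beating $n^{-1/4}$.

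For the leading term I would split it as $\sum_k g^2(X_{t_{k-1}})\big[n^{2H-1}(\Delta^{(i)}_{k,n}B^H)^2\big]$ and compare with $c^{(i)}\int_0^T g^2(X(t))\,dt$ in two stages. First replace $\int g^2(X)$ by the Riemann sum $\sum_k g^2(X_{t_{k-1}})\cdot\frac{T}{n}$: the error is $\mathcal{O}(n^{-1/p})$ by Lemma \ref{disc} applied to $g^2(X)$. Second, write the difference between $\sum_k g^2(X_{t_{k-1}})\big(n^{2H-1}(\Delta^{(i)}_{k,n}B^H)^2\big)$ and $c^{(i)}\sum_k g^2(X_{t_{k-1}})\frac{T}{n}$ as $\sum_k g^2(X_{t_{k-1}})\,\Delta^{(i)}_{k,n}\big(V_n^{(i)}(B^H,2)_\cdot - c^{(i)}\rho(\cdot)\big)$ using \eqref{lyg}, and then apply summation by parts (Abel transform): this turns the sum into boundary terms plus $\sum_k \big(V_n^{(i)}(B^H,2)_{t_k} - c^{(i)}\rho(t_k)\big)\cdot\Delta_k\big(g^2(X)\big)$. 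The factor $V_n^{(i)}(B^H,2)_{t} - \mathbf{E}V_n^{(i)}(B^H,2)_t$ is $\mathcal{O}(n^{-1/2}\ln^{1/2}n)$ uniformly in $t$ by Theorem \ref{konv1.}, and $\sum_k|\Delta_k(g^2(X))|\ls V_1(g^2(X);[0,T])\ls C$ (finite total variation, since $g^2(X)$ is of bounded $p$-variation with $p<2$ — actually one only needs a bound on $\sum_k|\Delta_k|$, which follows from \eqref{1.4a} and \eqref{1.6} together with $2H>1$ giving $\sum_k V_p(X;[t_{k-1},t_k])^? \to$ finite). Collecting: the bad term here is $\mathcal{O}(n^{-1/2}\ln^{1/2}n)$, which is better than the claimed rate, so it is the $g$-remainder terms and the interplay of $p$ with $\alpha$ that pin the exponent at $1/4$: choosing $p$ close to $1/H$ makes $n^{-1/p}\approx n^{-H}$ small but degrades other constants, and optimizing the two competing error exponents (the $n^{-1/p}$ Riemann-sum error versus the cross-term error of order roughly $n^{(H-1)/2}$ or so after pairing the $\mathcal{O}(n^{-1/2}\ln^{1/2}n)$ fluctuation with the $\mathcal{O}(n^{-1/p})$-size remainders) yields the balanced rate $n^{-1/4}\ln^{1/4}n$.

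The main obstacle I anticipate is bookkeeping the $g$-remainder $R_{k,n}$ precisely enough: one must show not just that each $R_{k,n}$ is small but that $n^{2H-1}\sum_k |g(X_{t_{k-1}})\,\Delta^{(i)}_{k,n}B^H\cdot R_{k,n}|$ is $\mathcal{O}(n^{-1/4}\ln^{1/4}n)$, which requires applying Cauchy--Schwarz in $k$ and then Young's inequality \eqref{1.5} (or \eqref{1.4a}) to convert $\sum_k V_p(R_{k,n})V_p(B^H;[t_{k-1},t_k])$ into a global $p$-variation product — and checking that the resulting power of $n$, combined with the $\ln^{1/2}n$ from Theorem \ref{konv1.} that appears when one also uses the fluctuation bound on the $\big(\Delta^{(i)}_{k,n}B^H\big)^2$ sums, lands on exactly $n^{-1/4}\ln^{1/4}n$. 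Getting the exponent arithmetic right — in particular verifying that the constraint $\alpha>\frac1H-1$ is exactly what is needed to make some admissible $p\in(1/H,2)$ give a remainder exponent clearing the threshold — is the delicate point; everything else is assembling the pieces already available in Section 2.
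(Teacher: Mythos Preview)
Your three--part decomposition matches the paper's ($I_n^{(1,i)}$: freeze $g$; $I_n^{(2,i)}$: fluctuation of the $(\Delta^{(i)}B^H)^2$ around their means; $I_n^{(3,i)}$: Riemann--sum error), and your treatment of the freezing remainders $R_{k,n}$ and of the Riemann--sum error is essentially right --- those terms are $\mathcal{O}(n^{-H+\varepsilon})$ or better, well under $n^{-1/4}$. The genuine gap is in the fluctuation term. You propose Abel summation on
\[
\sum_k g^2(X_{t_{k-1}})\big[n^{2H-1}(\Delta^{(i)}_{k,n}B^H)^2-c^{(i)}T/n\big],
\]
throwing the partial sums onto the bracket (bounded by $\sup_t|V_n^{(i)}(B^H,2)_t-\mathbf{E}V_n^{(i)}(B^H,2)_t|=\mathcal{O}(n^{-1/2}\ln^{1/2}n)$ from Theorem~\ref{konv1.}) and then claiming $\sum_k|\Delta_k g^2(X)|\ls V_1(g^2(X);[0,T])<\infty$. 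That last claim is false: $g^2(X)$ only has bounded $p$--variation for $p>1/H>1$, and along a uniform partition $\sum_k|\Delta_k g^2(X)|\sim n^{1-H+\varepsilon}\to\infty$. If you repair this with H\"older in $k$ using $p$--variation, Abel summation yields only $\mathcal{O}(n^{1/2-H+\varepsilon}\ln^{1/2}n)$, which for $H\in(1/2,3/4)$ is \emph{worse} than the claimed $n^{-1/4}$.

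The paper handles this term differently and this is precisely where the rate $n^{-1/4}\ln^{1/4}n$ is produced (not in the $R_{k,n}$ remainders, contrary to your last paragraph). It rewrites the sum as $\int_0^T g^2(X_t)\,d[S^{(i)}_t-\mathbf{E}S^{(i)}_t]$ and applies Love--Young with exponents $(p,2)$, giving a bound $C\,V_{p,\infty}(g^2(X))\,V_2(S^{(i)}-\mathbf{E}S^{(i)})$. The key move is then the interpolation \eqref{1.4}: $V_2\ls \mathrm{Osc}^{1/2}\,V_1^{1/2}$, applied to the \emph{integrator} $S^{(i)}-\mathbf{E}S^{(i)}$. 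Here $V_1(S^{(i)}-\mathbf{E}S^{(i)};[0,T])$ is $\mathcal{O}(1)$ because both $S^{(i)}$ and $\mathbf{E}S^{(i)}$ are monotone with bounded terminal values (Remark~\ref{rem7} and \eqref{lyg}), while $\mathrm{Osc}=\mathcal{O}(n^{-1/2}\ln^{1/2}n)$ by Theorem~\ref{konv1.}; this yields $V_2=\mathcal{O}(n^{-1/4}\ln^{1/4}n)$. In short: you put the ``sup'' on the right factor but the ``total variation'' on the wrong one --- the finite $V_1$ belongs to $S^{(i)}-\mathbf{E}S^{(i)}$, not to $g^2(X)$, and exploiting that is the step your proposal is missing.
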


\begin{proof} Decompose the left-hand side of \eqref{mars3} into three parts:
\begin{align*}
 I_n^{(i)}:=&V_n^{(i)}(X,2)-c^{(i)}\int_0^T g^2\big(X(t)\big)\,d t = I_n^{(1,i)}+I_n^{(2,i)}+I_n^{(3,i)},
 \end{align*}
 where
\begin{align*} I_n^{(1,i)}=&n^{2H-1}\sum_{k=1}^{n-(i-1)} \big(\D_{k,n}^{(i)}X\big)^2-\sum_{k=1}^{n-(i-1)} g^2(X_{k-1+(i-1)})\big(\D_{k,n}^{(i)}B^H\big)^2,\\
I_n^{(2,i)}=&n^{2H-1}\sum_{k=1}^{n-(i-1)} g^2(X_{k-1+(i-1)})\big(\D_{k,n}^{(i)}B^H\big)^2\nonumber\\
&- \sum_{k=1}^{n-(i-1)} g^2(X_{k-1+(i-1)})\E\big(\D_{k,n}^{(i)} B^H\big)^2,\\
I_n^{(3,i)}=& n^{2H-1} \sum_{k=1}^{n-(i-1)} g^2(X_{k-1+(i-1)})\E\big(\D_{k,n}^{(i)}B^H\big)^2- c^{(i)}\int_0^T g^2(X_s)ds,
\end{align*}
and  $X_k=X(t_k)$. We start with the most simple term $I_n^{(3,i)}$ and get immediately, similarly to bounds contained in \eqref{1.3c},  that for any $p>\frac1H$
\begin{align*}
\vert I_n^{(3,i)}\vert\ls& c^{(i)}\sum_{k=1-(i-1)}^{n-(i-1)}\int^{t_{k+(i-1)}}_{t_{k-1+(i-1)}} \big\vert g^2(X(t_{k-1+(i-1)}))-g^2(X_s) \big\vert\,ds\\
\ls& 2c^{(i)}T \vert g^\prime\vert_\infty \sup_{t\ls T}\vert X^{\pi}_t-X_t\vert\cdot \big[\vert g^\prime\vert_\infty V_p(X;[0,T])+\vert g(\xi)\vert\big]\,.
\end{align*}

In order to estimate $I_n^{(2,i)}$, denote
\[
S^{(i)}_t=n^{2H-1}\sum_{k=1}^{r(t)-({i-1})}\big(\Delta^{(i)}_{k,n} B^H\big)^2\,,\quad t\in[0,T],\quad i=1,2.
\]

Then
\begin{eqnarray*}
n^{2H-1}\sum_{k=1}^{n-({i-1})}g^2(X_{k-1+(i-1)})\big(\Delta^{(i)}_{k,n} B^H\big)^2=\int_0^T g^2(X_t)\,dS^{(i)}_t
\end{eqnarray*}
and
\begin{align*}
&n^{2H-1}\sum_{k=1}^{n-({i-1})}g^2(X_{k-1+(i-1)})\big[\big(\Delta^{(i)}_{k,n} B^H\big)^2-\E \big(\Delta^{(i)}_{k,n} B^H\big)^2\big]\\
&\quad= \int_0^T g^2(X_t)\,d\big[S^{(i)}_t-\E S^{(i)}_t\big].
\end{align*}

Note that $1/p+1/2>1$ for $\frac1H<p<2$. Therefore, we obtain from the  Love-Young inequality and from (\ref{1.4})-(\ref{1.4a}) that
\begin{align*}
|I^{(2,i)}_n|=&\bigg\vert \int_0^T g^2(X_t)\,d\big[S^{(i)}_t-\E S^{(i)}_t\big]\bigg\vert\\
\ls& C_{p,2}V_{p,\infty}\big(g^2(X) ;[0,T]\big) V_2\big(S^{(i)}-\E S^{(i)} ;[0,T]\big)\\
\ls& C_{p,2}\big\{\osc\big( S^{(i)}-\E S^{(i)} ;[0,T]\big)\big\}^{1/2} V_{p,\infty}\big(g^2(X);[0,T]\big)\\
&\times V_1^{1/2}\big(S^{(i)}-\E S^{(i)} ;[0,T]\big)\\
\ls& 2C_{p,2}\Big(\sup_{t\ls T}\big\vert S^{(i)}_t-\E S^{(i)}_t\big\vert\Big)^{1/2} V^2_{p,\infty}\big(g(X);[0,T]\big)\\
&\times \bigg[n^{2H-1}\sum_{k=1}^{n-(i-1)}\big(\Delta^{(i)}_{k,n} B^H\big)^2+c^{(i)}T\bigg]^{1/2}.
\end{align*}

It follows from Theorem \ref{konv1.}, Remark \ref{rem7}, and (\ref{1.3c}) that the rate of convergence of $I^{(2,i)}_n$ is $O(n^{-1/4}\ln^{1/4} n)$.

It still remains to estimate $I^{(1,i)}_n$. Consider only  $i=2$,  the proof for $i=1$ is similar.

Denote
\begin{align*}
J_{k}^1=&\int_{t_k}^{t_{k+1}}[f(X_s)-f(X_k)]\,ds-\int_{t_{k-1}}^{t_{k}}[f(X_s)-f(X_k)]\,ds,\\
J_{k}^2=&\int_{t_{k-1}}^{t_{k}} \bigg(g(X_k)- g(X_s)
-\int_{s}^{t_k} g^\prime(X_k)f(X_k)\,du-\int_{s}^{t_k} g^\prime(X_k)g(X_k)\,dB^H_u\bigg)dB^H_s,\\
J_{k}^3=&\int_{t_k}^{t_{k+1}} \bigg( g(X_s)-g(X_k)
-\int_{t_k}^s g^\prime(X_k)f(X_k)\,du-\int_{t_k}^s g^\prime(X_k)g(X_k)\,dB^H_u\bigg)dB^H_s, \\
J_{k}^4=&g^\prime(X_k)f(X_k)\Big(\int_{t_k}^{t_{k+1}}(s-t_k)\,dB^H_s+\int_{t_{k-1}}^{t_{k}}(t_k-s)\,dB^H_s\Big),\\
J_{k}^5=&\frac{1}{2}\,g^\prime(X_k)g(X_k)\Big( \big(\Delta_{k,n}^{(1)} B^H\big)^2+\big(\Delta_{k+1,n}^{(1)} B^H\big)^2\Big),\qquad
J_{k}^6=g(X_k)\Delta^{(2)}_{k,n} B^H.
\end{align*}
Equalities
\begin{align*}
\int_{t_{k-1}}^{t_{k}} \bigg(\int_{s}^{t_k} g^\prime(X_k)g(X_k)\,dB^H_u\bigg)dB^H_s =& \frac{1}{2}\,g^\prime(X_k)g(X_k) \big(\Delta_k^{(1)} B^H\big)^2,\\
\int_{t_k}^{t_{k+1}}\bigg(\int_{t_k}^s g^\prime(X_k)g(X_k)\,dB^H_u\bigg)dB^H_s =&\frac{1}{2}\,g^\prime(X_k)g(X_k) \big(\Delta_{k+1,n}^{(1)} B^H\big)^2.
\end{align*}
(see Proposition \ref{2.1}) imply
\[
\Delta^{(2)}_{k,n} X=\sum_{l=1}^{6}J_{k}^l.
\]
Taking into account   Lipschitz property of $f$  and   Lemma \ref{disc}, we can  conclude that
\[
\vert f(X_t)-f(X_k)\vert\ls L \sup_{t\ls T}\vert X^{\pi}_t-X_t\vert=\mathcal{O}\big(n^{-1/p}\big).
\]
Therefore
\begin{align*}
\sum_{k=1}^{n-1}(J_{k}^1)^2\ls& 2Tn^{-1} \sum_{k=1}^{n-1}\int_{t_k}^{t_{k+1}} [f(X_s)-f(X_k)]^2\,ds\\
&+2Tn^{-1} \sum_{k=1}^{n-1}\int_{t_{k-1}}^{t_{k}} [f(X_s)-f(X_k)]^2\,ds\\
\ls& 4Tn^{-1} L^2\Big(\sup_{t\ls T}\vert X^{\pi}_t-X_t\vert\Big)^2  \\
=&\mathcal{O}\big(n^{-1-2/p}\big).
\end{align*}

Consider $J_{k}^2$. It follows from  equality (\ref{2.3}) that for any fixed  $t\in[t_{k-1},t_k]$
\begin{equation}\label{Ito1}
g(X_{k})-g(X_t)=\int^{t_k}_t g^\prime(X_s)f(X_s)\,ds+\int^{t_k}_t g^\prime(X_s)g(X_s)\,dB^H_s.
\end{equation}

Substituting equality (\ref{Ito1}) into $J_{k}^2$ we get
\begin{align}\label{mars5}
\vert J_{k}^2\vert\ls&\bigg\vert \int_{t_{k-1}}^{t_k}\int^{t_k}_s \big[ g^\prime(X_u)f(X_u)
- g^\prime(X_k)f(X_k)\big]du \,dB^H_s\bigg\vert\nonumber\\
&+\bigg\vert \int_{t_{k-1}}^{t_k}\int^{t_k}_s \big[ g^\prime(X_u)g(X_u)- g^\prime(X_k)g(X_k)\big]dB^H_u \,dB^H_s\bigg\vert\,.
\end{align}

Transforming identically  the  first term in the right-hand side of \eqref{mars5} and applying  to it   Love-Young inequality (\ref{1.2}), we conclude that for any $p>\frac1H$
\begin{align}\label{vert1}
&\bigg\vert\int_{t_{k-1}}^{t_k}\int^{t_k}_s \big[ g^\prime(X_u)f(X_u)- g^\prime(X_k)f(X_k)\big]du \,dB^H_s\bigg\vert \nonumber\\
&\quad\ls C_{p,1} V_1\bigg(\int_\cdot^{t_{k}} \big[ g^\prime(X_u)f(X_u)
- g^\prime(X_k)f(X_k)\big]du;[t_{k-1},t_k]\bigg)V_p\big( B^H;[t_{k-1},t_k]\big)\nonumber\\
&\quad\ls C_{p,1}V_p\big( B^H;[t_{k-1},t_k]\big)\int^{t_k}_{t_{k-1}} \big\vert g^\prime(X_u)f(X_u)- g^\prime(X_k)f(X_k)\big\vert\,du.
\end{align}
Henceforth  we consider the following interval of the values of $p$: $\frac1H<p< 1+\a$. Then it follows  from inequality (\ref{1.3a}) that the second term in the right-hand side of \eqref{mars5} admits the  bound:
\begin{align}\label{mars6}
&\bigg\vert \int_{t_{k-1}}^{t_k}\int^{t_k}_s \big[ g^\prime(X_u)g(X_u)- g^\prime(X_k)g(X_k)\big]dB^H_u \,dB^H_s\bigg\vert\nonumber\\
&\quad\ls C_{p,p/\alpha} V_{p/\alpha}\bigg(\int^{t_{k}}_\cdot \big[ g^\prime(X_u)g(X_u)
- g^\prime(X_k)g(X_k)\big]dB^H_u;[t_{k-1},t_k]\bigg)\nonumber\\
&\qquad\times V_p\big( B^H;[t_{k-1},t_k]\big)\nonumber\\
&\quad\ls 2C^2_{p,p/\alpha}V_{p/\alpha}\big( g^\prime(X_u)g(X_u)
;[t_{k-1},t_k]\big) V^2_p\big( B^H;[t_{k-1},t_k]\big).
\end{align}

We conclude  from \eqref{mars5}--\eqref{mars6} that
\begin{align*}
\vert  J^2_k\vert
\ls& Tn^{-1} C_{p,1} V_{p/\alpha}\big( g^\prime(X)f(X);[t_k,t_{k+1}]\big)V_p\big( B^H;[t_k,t_{k+1}]\big)\\
&+ 2C^2_{p,p/\alpha} V_{p/\alpha}\big( g^\prime(X)g(X);[t_k,t_{k+1}]\big)V^2_p\big( B^H;[t_k,t_{k+1}]\big).
\end{align*}
Applying  inequalities (\ref{1.4a}) and (\ref{1.5}) we immediately obtain  that
\begin{align*}
\sum_{k=1}^n \big( J^2_k\big)^2\ls& 2T^2 C^2_{p,1} n^{-2}\max_{0\ls k\ls n-1}\big[ V_{p/\alpha}\big( g^\prime(X)f(X);[t_k,t_{k+1}]\big)
V_p\big( B^H;[t_k,t_{k+1}]\big)\big]\\
&\times V_{p/\alpha}\big( g^\prime(X)f(X);[0,T]\big)V_p\big( B^H;[0,T]\big)\\
&+ 4C^4_{p,p/\alpha}\max_{0\ls k\ls n-1}\big[ V_{p/\alpha}\big( g^\prime(X)g(X);[t_k,t_{k+1}]\big)V^3_p\big( B^H;[t_k,t_{k+1}]\big)\big]\\
&\times V_{p/\alpha}\big( g^\prime(X)g(X);[0,T]\big)V_p\big( B^H;[0,T]\big)\\
\ls& 2 T^2 C^2_{p,1} n^{-2}\max_{0\ls k\ls n-1}\big[V_p\big( B^H;[t_k,t_{k+1}]\big)\big]
V^2_{p/\alpha,\infty}\big( g^\prime(X);[0,T]\big)\\
&\times V^2_{p,\infty}\big( f(X);[0,T]\big)V_p\big( B^H;[0,T]\big)\\
&+ 4C^4_{p,p/\alpha}\max_{0\ls k\ls n-1}\big[V^3_p\big( B^H;[t_k,t_{k+1}]\big)\big]\\
&\times V^2_{p/\alpha,\infty}\big( g^\prime(X);[0,T]\big) V^2_{p/\alpha,\infty}\big( g(X);[0,T]\big)V_p\big( B^H;[0,T]\big).
\end{align*}
It follows from the inequalities (\ref{1.3b})--(\ref{1.3d}) that the values of the variations
\[
V_{p,\infty}\big( f(X);[0,T]\big), \qquad V_{p/\alpha,\infty}\big( g(X);[0,T]\big), \qquad\text{and}\qquad V_{p/\alpha,\infty}\big( g^\prime(X);[0,T]\big)
\]
are finite. Therefore  we get from (\ref{1.6}) that
\[
\sum_{k=1}^n \big( J^2_k\big)^2= \mathcal{O}(n^{-2-1/p})+  \mathcal{O}(n^{-3/p})=\mathcal{O}(n^{-3/p}).
\]

The similar reasonings lead to the similar bound for $ J^3_k$, and we conclude that
\[
\sum_{k=0}^{n-1} [J^2_k+J^3_k]^2=\mathcal{O}\big(n^{-3/p}\big).
\]

Consider $J_k^4$. It consists of two terms that can be estimated in a similar way. Applying inequalities \eqref{1.2}  and (\ref{1.6}),  we obtain the following bound for the first term:
\begin{align*}
& \sum_{k=1}^{n-1} \big[g^\prime(X_k)f(X_k)\big]^2\bigg(\int_{t_k}^{t_{k+1}}(s-t_k)\,dB^H_s\bigg)^2\\
&\quad\ls C^2_{p,1}\sum_{k=1}^{n-1} \big[g^\prime(X_k)f(X_k)\big]^2 (t_{k+1}-t_k)^2V^2_p\big( B^H;[t_k,t_{k+1}]\big)\\
&\quad\ls n^{-1}T^2C^2_{p,1}\max_{1\ls k\ls n-1}\big[g^\prime(X_k)f(X_k) V_p\big( B^H;[t_k,t_{k+1}]\big)\big]^2
=\mathcal{O}\big(n^{-1-2/p}\big).
\end{align*}
As a consequence,
\[
\sum_{k=1}^{n-1} \big(J^4_k\big)^2=\mathcal{O}\big(n^{-1-2/p}\big).
\]

Furthermore, note that under our assumptions $\sup_{s\in[0,T]}|g(X_s)|<\infty$ and $\sup_{s\in[0,T]}|g^\prime(X_s)|<\infty$ a.s. Therefore we have for the first term in $J_k^5$ that
\begin{align*}
\sum_{k=1}^{n-1}\Big[ g^\prime(X_k)g(X_k)
\big(\Delta_{k+1,n}^{(1)} B^H\big)^2\big]^2
\ls& \max_{1\ls k\ls n-1}\big[g^\prime(X_k)g(X_k)\big]^2
\sum_{k=0}^{n-1}\big(\Delta_{k+1,n}^{(1)} B^H\big)^4\\
=&\mathcal{O}\big(n^{1-4/p}\big).
\end{align*}
The second term is bounded in a similar way, and we conclude that
\[
\sum_{k=1}^{n-1} \big(J^5_k\big)^2=\mathcal{O}\big(n^{1-4/p}\big).
\]

Thus
\[
\bigg(\sum_{l=1}^{5}J_{k}^l\bigg)^2=\mathcal{O}\big(n^{-1-2/p}\lor n^{-3/p}\lor n^{1-4/p}\big)=\mathcal{O}\big(n^{1-4/p}\big).
\]
At last,
\begin{align}\label{konv}
n^{2H-1}\sum_{k=1}^{n-1}\big[\Delta^{(2)}_{k,n}
X-g(X_k)\Delta^{(2)}_{k,n} B^H\big]^2=&\mathcal{O}\big(n^{1-4/p+2H-1}\big)\nonumber\\
=&\mathcal{O}\big(n^{-4/p+2H}\big)
\end{align}
for any $\frac1H< p< 1+\a$.  Set  $1/p=H-\eps$  for $\eps<(H/2-1/16)\wedge(H-\frac{1}{1+\alpha})$. Then
\begin{align}\label{konv1}
&V_n^{(2)}(X,2)-c^{(2)}\int_0^T g^2(X_s)\,ds\nonumber\\
&\quad=\mathcal{O}\big(n^{-4/p+2H}\big)+\mathcal{O}\big(n^{-1/4}\ln^{1/4} n\big)+\mathcal{O}\big(n^{-1/p}\big)\nonumber\\
&\quad=\mathcal{O}\big(n^{-2H+4\eps}\big)+\mathcal{O}\big(n^{-1/4}\ln^{1/4} n\big)+\mathcal{O}\big(n^{-H+\eps}\big)\nonumber\\
&\quad=\mathcal{O}\big(n^{-1/4}\ln^{1/4} n\big).
\end{align}
\end{proof}

\section{The rate of convergence of estimators of Hurst index}

Consider the following statistics: $$R_n^{(i)}=\frac{\sum_{k=1}^{2n-(i-1)}(\Delta^{(i)}_{k,2n}X)^2}{\sum_{k=1}^{n-(i-1)}(\Delta^{(i)}_{k,n}X)^2}$$ and construct the following estimate of Hurst index $H$:
\[
\widehat{H}^{(i)}_n=\bigg(\frac{1}{2} - \frac{1}{2\ln2}\ln R_n^{(i)}\bigg){\bf 1}_{\widetilde{C}_n},
\]
where
\[
\widetilde{C}_n=\bigg\{2^{-1}\big(1-2 n^{-1/4}(\ln n)^{1/4+\b}\big)\ls R_n^{(i)}\ls 1+2 n^{-1/4}(\ln n)^{1/4+\b} \bigg\},\qquad\beta>0.
\]
 Further, introduce the following notation: $g^{(i)}(T)=c^{(i)}\int_0^T g^2(X_s)ds$.
\begin{thm}\label{mainthm} Let conditions of Theorem \ref{var1} hold with $\alpha>\frac1H-1.$ Also, let $X$  be a solution of (\ref{SIE}) and assume that random variable $g^{(i)}(T)$ is separated from zero:  there exists a constant $c_0>0$ such that $g^{(i)}(T)\gs c_0$ a.s. Then $\widehat{H}^{(i)}_n$ is a strongly consistent estimator of the Hurst index $H$ and the following rate  of convergence holds\emph{:}
\[
\vert \widehat H^{(i)}_n-H\vert=\mathcal{O}\big(n^{-1/4}(\ln n)^{1/4+\b}\big)\quad\mbox{a.s.},
\]
for any $\b>0$.
\end{thm}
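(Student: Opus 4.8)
The plan is to reduce the statement about $\widehat H^{(i)}_n$ to the rate of convergence already established in Theorem~\ref{var2} for the quadratic variations $V_n^{(i)}(X,2)$. First I would express $R_n^{(i)}$ in terms of the normalized quantities: since $\sum_{k=1}^{n-(i-1)}(\Delta^{(i)}_{k,n}X)^2 = n^{1-2H} V_n^{(i)}(X,2)$ and similarly with $2n$ in place of $n$, one gets
\[
R_n^{(i)} = \frac{(2n)^{1-2H} V_{2n}^{(i)}(X,2)}{n^{1-2H} V_n^{(i)}(X,2)} = 2^{1-2H}\,\frac{V_{2n}^{(i)}(X,2)}{V_n^{(i)}(X,2)}.
\]
By Theorem~\ref{var2}, $V_n^{(i)}(X,2) = g^{(i)}(T) + r_n$ where $r_n = \mathcal{O}(n^{-1/4}\ln^{1/4} n)$ a.s., and likewise $V_{2n}^{(i)}(X,2) = g^{(i)}(T) + r_{2n}'$ with $r_{2n}' = \mathcal{O}(n^{-1/4}\ln^{1/4} n)$. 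Using the hypothesis $g^{(i)}(T)\gs c_0>0$ a.s., a first-order expansion of the ratio gives
\[
\frac{V_{2n}^{(i)}(X,2)}{V_n^{(i)}(X,2)} = 1 + \frac{r_{2n}' - r_n}{g^{(i)}(T) + r_n} = 1 + \mathcal{O}\big(n^{-1/4}\ln^{1/4} n\big)\quad\text{a.s.},
\]
so that $R_n^{(i)} = 2^{1-2H}\big(1 + \mathcal{O}(n^{-1/4}\ln^{1/4} n)\big)$ a.s.

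Next I would verify that the indicator $\ind_{\widetilde C_n}$ is eventually equal to $1$ a.s. For the true Hurst index $H\in(1/2,1)$ we have $2^{-1}\ls 2^{-2H}\cdot 2^{1-2H}\cdot\ldots$; more precisely, since $1/2 < H < 1$, the limiting value $2^{1-2H}$ of $R_n^{(i)}$ lies strictly inside the open interval $(2^{-1},1)$ (indeed $2^{1-2H}\in(1/2,1)$ exactly when $1/2<H<1$). Because $R_n^{(i)}\to 2^{1-2H}$ a.s. and $2^{1-2H}$ is bounded away from both endpoints $1/2$ and $1$, while the width of the tolerance interval $\widetilde C_n$ shrinks to zero, for all $n$ large enough (depending on $\omega$) the inequality $2^{-1}(1 - 2n^{-1/4}(\ln n)^{1/4+\b})\ls R_n^{(i)}\ls 1 + 2n^{-1/4}(\ln n)^{1/4+\b}$ holds a.s. Hence $\ind_{\widetilde C_n} = 1$ for $n$ large, and on that event $\widehat H^{(i)}_n = \tfrac12 - \tfrac{1}{2\ln 2}\ln R_n^{(i)}$.

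Finally, I would plug the expansion of $R_n^{(i)}$ into this formula. Writing $R_n^{(i)} = 2^{1-2H}(1 + \delta_n)$ with $\delta_n = \mathcal{O}(n^{-1/4}\ln^{1/4} n)$ a.s., we get $\ln R_n^{(i)} = (1-2H)\ln 2 + \ln(1+\delta_n) = (1-2H)\ln 2 + \mathcal{O}(\delta_n)$, using $|\ln(1+\delta_n)|\ls 2|\delta_n|$ for $|\delta_n|\ls 1/2$. Therefore
\[
\widehat H^{(i)}_n = \frac12 - \frac{(1-2H)\ln 2 + \mathcal{O}(\delta_n)}{2\ln 2} = \frac12 - \frac{1-2H}{2} + \mathcal{O}(\delta_n) = H + \mathcal{O}\big(n^{-1/4}\ln^{1/4} n\big)\quad\text{a.s.},
\]
which in particular gives strong consistency and, since $\ln^{1/4} n \ls (\ln n)^{1/4+\b}$ for any $\b>0$ and $n$ large, the claimed rate $|\widehat H^{(i)}_n - H| = \mathcal{O}(n^{-1/4}(\ln n)^{1/4+\b})$ a.s.

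The main obstacle is the denominator control: one must use the assumption $g^{(i)}(T)\gs c_0$ a.s.\ carefully to ensure that the random normalizing constant $g^{(i)}(T)+r_n$ stays bounded away from zero uniformly in $n$ along almost every sample path, so that the error terms $r_n$ and $r_{2n}'$ genuinely propagate through the division and the logarithm with the same order $n^{-1/4}\ln^{1/4} n$; the extra factor $(\ln n)^{\b}$ in the statement is then simply slack that absorbs the constants and the transition from $\ln^{1/4} n$ to $(\ln n)^{1/4+\b}$, and also guarantees that the shrinking set $\widetilde C_n$ eventually contains $R_n^{(i)}$.
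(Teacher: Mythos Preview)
Your argument is correct and in fact tighter than the paper's, but it follows a noticeably different route. The paper proceeds by introducing an auxiliary sequence $\delta_n$ and the events $C_n=\{\tfrac12(1-\delta_n)\le R_n^{(i)}\le 1+\delta_n\}$, then writes $\widehat H_n^{(i)}-H$ as a sum of four pieces $L_n^1,\dots,L_n^4$ according to whether $A_n^{(i)}/B_n^{(i)}$ (with $A_n^{(i)}=V_{2n}^{(i)}(X,2)$, $B_n^{(i)}=V_n^{(i)}(X,2)$) lies in various subintervals. Each piece is bounded by elementary inequalities for $\ln(1\pm x)$, and the ``bad'' events are shown to satisfy $\{|A_n^{(i)}/B_n^{(i)}-1|>\delta_n\}\subset\{|A_n^{(i)}-B_n^{(i)}|>\delta_n(\ln n)^{-\beta}\}\cup\{B_n^{(i)}<(\ln n)^{-\beta}\}$; the separation hypothesis $g^{(i)}(T)\ge c_0$ and Theorem~\ref{var2} then force these indicators to vanish for large $n$, once one chooses $\delta_n=n^{-1/4}(\ln n)^{1/4+2\beta}$. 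The extra $(\ln n)^{\beta}$ in the final rate comes precisely from this choice of $\delta_n$.

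You instead go straight to the expansion $R_n^{(i)}=2^{1-2H}\bigl(1+\mathcal{O}(n^{-1/4}\ln^{1/4}n)\bigr)$ by dividing the two instances of Theorem~\ref{var2}, using the \emph{deterministic} lower bound $c_0$ on $g^{(i)}(T)$ to control the denominator pathwise. This bypasses the case analysis entirely and even yields the sharper rate $\mathcal{O}(n^{-1/4}\ln^{1/4}n)$, which you then relax to match the statement. One small slip: the interval $\widetilde C_n$ does not ``shrink to zero''---its endpoints tend to $1/2$ and $1$, so it converges to $[1/2,1]$. The correct reason $\ind_{\widetilde C_n}=1$ eventually is the one you give just before: $R_n^{(i)}\to 2^{1-2H}\in(1/2,1)$, strictly interior to the limiting interval. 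With that phrasing fixed, your argument is complete and considerably shorter than the paper's.
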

\begin{proof} Consider a sequence $1>\d_n\downarrow 0$ as $n\rightarrow\infty$. It will be specified later on.  Introduce the events
\[
C_n=\bigg\{\frac12(1-\d_n)\ls R_n^{(i)}\ls 1+\d_n \bigg\}.
\]
Also, introduce the notations
\[
A^{(i)}_n=V_{2n}^{(i)}(X,2)\quad \mbox{and}\quad B^{(i)}_n=V_n^{(i)}(X,2)
\]
and note that $2^{2H-1}R_n^{(i)}=\frac{A_n^{(i)}}{B_n^{(i)}}.$
Then $$ C_n=\bigg\{2^{2H-2}(1-\d_n)\ls\frac{A_n^{(i)}}{B_n^{(i)}}\ls 2^{2H-1}(1+\d_n )\bigg\},$$
and   estimate $\widehat{H}^{(i)}_n$ has a form
\[
\widehat{H}^{(i)}_n=\bigg(\frac{1}{2} - \frac{1}{2\ln2}\ln R_n^{(i)}\bigg){\bf 1}_{{C}_n}.
\]
It is easy to see that $\overline C_n:=\Omega\backslash C_n$ has a form
\begin{align}
\overline C_n
=&\bigg\{\frac{A^{(i)}_n}{B^{(i)}_n}< 2^{2H-2}(1-\d_n)\bigg\}\bigcup \bigg\{\frac{A^{(i)}_n}{B^{(i)}_n}> 2^{2H-1}(1+\d_n)\bigg\}\nonumber\\
\subset& \bigg\{\bigg\vert\frac{A^{(i)}_n}{B^{(i)}_n}-1\bigg\vert>\d_n\bigg\}.
\end{align}

Then
\begin{align*}
\widehat{H}^{(i)}_n=&H{\bf 1}_{C_n}-\frac{1}{2\ln2}\ln \frac{(2n)^{2H-1}V_{2n}^{(i)}(X,2)}{ n^{2H-1}V_n^{(i)}(X,2)}\,{\bf 1}_{C_n}\\
=&H{\bf 1}_{C_n}-\frac{1}{2\ln2}\ln \frac{A^{(i)}_n}{B^{(i)}_n}\,{\bf 1}_{C_n}.
\end{align*}
The latter representation implies that
\begin{align}\label{1.7}
\big\vert\widehat{H}^{(i)}_n-H\big\vert\ls& H{\bf 1}_{\big\{\big\vert\frac{A^{(i)}_n}{B^{(i)}_n}-1\big\vert>\d_n\big\}}+\frac{1}{2\ln2}\bigg\vert\ln \frac{A^{(i)}_n}{ B^{(i)}_n}\bigg\vert\,{\bf 1}_{\big\{1-\d_n\ls\frac{A^{(i)}_n}{B^{(i)}_n}\ls 1+\d_n\big\}}\nonumber\\
&-\bigg(\frac{1}{2\ln2}\,\ln \frac{A^{(i)}_n}{ B^{(i)}_n}\bigg){\bf 1}_{\big\{2^{2H-2}(1-\d_n)\ls\frac{A^{(i)}_n}{B^{(i)}_n}< 1-\d_n \big\}}\nonumber\\
&+\bigg(\frac{1}{2\ln2}\,\ln \frac{A^{(i)}_n}{ B^{(i)}_n}\bigg){\bf 1}_{\big\{1+\d_n\ls\frac{A^{(i)}_n}{B^{(i)}_n}< 2^{2H-1}(1+\d_n ) \big\}}
:=\sum_{l=1}^4 L_n^l.
\end{align}
In what follows we need an elementary inequalities:  $-\ln(1-x)\leq 2\ln(1+x)\ls 2x$ provided that $0\ls x\ls 1/2$.

Consider $L_n^2$. We divide it in two parts. As to the first part, it is obvious that
\[
\bigg(\ln \frac{A^{(i)}_n}{ B^{(i)}_n}\bigg){\bf 1}_{\big\{1-\d_n\ls \frac{A^{(i)}_n}{B^{(i)}_n}<1\big\}}=\bigg(\ln\bigg[1-\bigg(1-\frac{A^{(i)}_n}{ B^{(i)}_n}\bigg)\bigg]\bigg){\bf 1}_{\big\{1-\d_n\ls \frac{A^{(i)}_n}{B^{(i)}_n}<1\big\}},
\]
and
\[
1-\d_n\ls \frac{A^{(i)}_n}{B^{(i)}_n}<1\quad\text{implies that}\quad 0<1- \frac{A^{(i)}_n}{ B^{(i)}_n}\ls \d_n.
\]
Applying inequality  $-\ln(1-x)\ls 2x$, $0\ls x\ls 1/2$, we deduce that for $\d_n\ls 1/2$
\[
\bigg(-\ln \frac{A^{(i)}_n}{ B^{(i)}_n}\bigg){\bf 1}_{\big\{1-\d_n\ls \frac{A^{(i)}_n}{B^{(i)}_n}<1\big\}}\ls 2\bigg(1-\frac{A^{(i)}_n}{ B^{(i)}_n}\bigg){\bf 1}_{\big\{1-\d_n\ls \frac{A^{(i)}_n}{B^{(i)}_n}<1\big\}}\ls 2\d_n{\bf 1}_{\big\{1-\d_n\ls \frac{A^{(i)}_n}{B^{(i)}_n}<1\big\}}.
\]
As to the second part,
\begin{align*}
\bigg(\ln \frac{A^{(i)}_n}{ B^{(i)}_n}\bigg){\bf 1}_{\big\{1\ls\frac{A^{(i)}_n}{B^{(i)}_n}\ls  1+\d_n\big\}}=&\bigg(\ln\bigg[1+\bigg(\frac{A^{(i)}_n}{ B^{(i)}_n}-1\bigg)\bigg]\bigg){\bf 1}_{\big\{1\ls\frac{A^{(i)}_n}{B^{(i)}_n}\ls 1+\d_n\big\}}\\
\ls&\bigg(\frac{A^{(i)}_n}{ B^{(i)}_n}-1\bigg){\bf 1}_{\big\{1\ls\frac{A^{(i)}_n}{B^{(i)}_n}\ls 1+\d_n\big\}}
\ls \d_n{\bf 1}_{\big\{1\ls\frac{A^{(i)}_n}{B^{(i)}_n}\ls 1+\d_n\big\}}.
\end{align*}

Consider $L_n^3$.  From here we easy deduce that
\begin{align*}&-\bigg(\frac{1}{2\ln2}\,\ln \frac{A^{(i)}_n}{ B^{(i)}_n}\bigg){\bf 1}_{\big\{2^{2H-2}(1-\d_n)\ls\frac{A^{(i)}_n}{B^{(i)}_n}< 1-\d_n \big\}}\\
&\quad\ls -\frac{1}{2\ln2}\big[\ln \big(2^{2H-2}(1-\d_n)\big)\big]{\bf 1}_{\big\{2^{2H-2}(1-\d_n)\ls\frac{A^{(i)}_n}{B^{(i)}_n}< 1-\d_n \big\}}\\
&\quad\ls \bigg((1-H)-\frac{\ln (1-\d_n)}{2\ln2}\bigg){\bf 1}_{\big\{2^{2H-2}(1-\d_n)\ls\frac{A^{(i)}_n}{B^{(i)}_n}< 1-\d_n \big\}}\\
&\quad\ls \bigg(1-H+\frac{\d_n}{\ln2}\bigg){\bf 1}_{\big\{2^{2H-2}(1-\d_n)\ls\frac{A^{(i)}_n}{B^{(i)}_n}< 1-\d_n\big\}}\\
&\quad\ls (1+2\d_n) {\bf 1}_{\big\{\frac{A^{(i)}_n}{B^{(i)}_n}< 1-\d_n \big\}}.
\end{align*}

The term $L_n^4$ is estimated similarly as the second part of $L_n^2$. Thus we get
\begin{align*}
\bigg(\ln \frac{A^{(i)}_n}{ B^{(i)}_n}\bigg){\bf 1}_{\big\{1+\d_n\ls\frac{A^{(i)}_n}{B^{(i)}_n}\ls  2^{2H-1}(1+\d_n)\big\}}\ls&
\bigg(\frac{A^{(i)}_n}{ B^{(i)}_n}-1\bigg){\bf 1}_{\big\{1+\d_n\ls\frac{A^{(i)}_n}{B^{(i)}_n}\ls  2^{2H-1}(1+\d_n)\big\}}\\
\ls& (1+2\d_n){\bf 1}_{\big\{\frac{A^{(i)}_n}{B^{(i)}_n}> 1+\d_n\big\}}.
\end{align*}
Summarizing, we conclude that
\begin{align*}
\vert \widehat{H}^{(i)}_n-H\vert\ls&(1+2\d_n){\bf 1}_{\big\{\big\vert\frac{A^{(i)}_n}{B^{(i)}_n}-1\big\vert>\d_n\big\}}+2\d_n{\bf 1}_{\big\{1-\d_n\ls \frac{A^{(i)}_n}{B^{(i)}_n}\ls 1+\d_n\big\}}\\
\ls&(1+2\d_n){\bf 1}_{\big\{\big\vert\frac{A^{(i)}_n}{B^{(i)}_n}-1\big\vert>\d_n\big\}}+2\d_n.
\end{align*}
Now, let $\b>0$. Note that
\begin{align*}
&\bigg\{\bigg\vert\frac{A^{(i)}_n}{B^{(i)}_n}-1\bigg\vert>\d_n\bigg\}\\
&\quad\subset\bigg\{\bigg\vert\frac{A^{(i)}_n}{B^{(i)}_n}-1\bigg\vert>\d_n, B^{(i)}_n\gs (\ln n)^{-\b} \bigg\}\bigcup\big\{B^{(i)}_n< (\ln n)^{-\b} \big\}\\
&\quad=\big\{\vert A^{(i)}_n-B^{(i)}_n\vert> \d_n B^{(i)}_n, B^{(i)}_n\gs (\ln n)^{-\b}\big\}\cup\big\{B^{(i)}_n< (\ln n)^{-\b} \big\}\\
&\quad\subset \big\{\big\vert A^{(i)}_n-B^{(i)}_n\big\vert> \d_n (\ln n)^{-\b}\big\}\cup\big\{B^{(i)}_n< (\ln n)^{-\b} \big\}.
\end{align*}
Therefore
\[
\vert \widehat{H}^{(i)}_n-H\vert\ls(1+2\d_n){\bf 1}_{\{\vert A^{(i)}_n-B^{(i)}_n\vert> \d_n (\ln n)^{-\b}\}\cup\{B^{(i)}_n< (\ln n)^{-\b}\}}+2\d_n.
\]

It follows from (\ref{konv1})  that
\[
\vert A^{(i)}_n-B^{(i)}_n\vert=\mathrm{O}\big(n^{-1/4}\ln^{1/4} n\big)
\]
and
\[
\bigg\vert B^{(i)}_n-c^{(i)}\int_0^T g^2(X_s)ds\bigg\vert=\mathrm{O}\big(n^{-1/4}\ln^{1/4} n\big).
\]
Obviously, for any $n>\exp\{\big(\frac{2}{c_0}\big)^{\frac{1}{\beta}}\}$ we have that $g^{(i)}(T)\gs c_0\geq 2(\ln n)^{-\b}$ a.s. and
\[
\big\{B^{(i)}_n< (\ln n)^{-\b} \big\}=\big\{B^{(i)}_n< (\ln n)^{-\b},g^{(i)}(T)\gs 2(\ln n)^{-\b} \big\}.
\]

Now, let $\d_n<(\ln n)^{-\b}$.  Then it is not hard to deduce that
\begin{align*}
&\big\{B^{(i)}_n< (\ln n)^{-\b},g^{(i)}(T)\gs 2(\ln n)^{-\b} \big\}\\
&\quad=\big\{B^{(i)}_n< (\ln n)^{-\b},g^{(i)}(T)\gs 2(\ln n)^{-\b},B^{(i)}_n<g^{(i)}(T)-\d_n \big\}\\
&\quad\subset\big\{\vert B^{(i)}_n-g^{(i)}(T)\vert>\d_n \big\}.
\end{align*}

Therefore,
\[
\big\{B^{(i)}_n< (\ln n)^{-\b} \big\}\subset\big\{\vert B^{(i)}_n-g^{(i)}(T)\vert>\d_n \big\}
\]
if $n>\exp\{\big(\frac{2}{c_0}\big)^{\frac{1}{\beta}}\}$.

Finally, specify $\d_n$. More precisely, set   $\d_n=n^{-1/4}(\ln n)^{1/4+2\b}$, $\b>0$. Note that $\d_n<(\ln n)^{-\b}$ for sufficiently large $n$ and, moreover,
\[
\frac{\mathcal{O}\big(n^{-1/4}\ln^{1/4} n\big)}{\d_n(\ln n)^{-\b}}=\frac{\mathcal{O}\big(n^{-1/4}\ln^{1/4} n\big)}{n^{-1/4}(\ln n)^{1/4+\b}} \longrightarrow 0\qquad\mbox{a.s.}\quad\mbox{as}\ n\to\infty.
\]
The latter relation together with Theorem \ref{var2} imply that for any $\omega\in\Omega^{\prime}$ with $P(\Omega^{\prime})=1$ there exists $n_0=n_0(\omega)$ such that for any $n>n_0$
\[
{\bf 1}_{\big\{\vert A^{(i)}_n-B^{(i)}_n\vert> \d_n (\ln n)^{-\b}\big\}\cup\big\{B^{(i)}_n< (\ln n)^{-\b}\big\}}=0\quad\mbox{a.s.},
\]
and we obtain the proof.

\end{proof}

\section{Simulation results}

Consider  fractional Ornstein-Uhlenbeck process that is the solution of the linear stochastic differential equation
\[
dX_t= - X_tdt + dB^H_t,\qquad X_0=0.
\] with the step $0.05$ and for increasing (in the logarithmic scale) number $n$
 of points  from $n=10^2$ to $n=10^6$.
Table 1 presents the values of the difference $\vert \widehat H^{(1)}_n-H\vert$ for the values of $H$ from $0.55$ to $0.95$.
We can conclude that the difference $\vert \widehat H^{(1)}_n-H\vert$ decreases rapidly in $n$ and for fixed value of $n$ increases in $H$.
Table 2 demonstrates that the rate of convergence  agrees with Theorem \ref{mainthm}, at least, for $\beta=0.05$.
 Moreover, we can see from Table 3 that in the case of the linear equation  the rate of convergence  for $H\in(0.5, 0.7)$  can be estimated by $n^{-1/2}(\ln n)^{1/2}$.
%

\newpage

\begin{table}[h]
\caption{  $\vert\widehat H^{(1)}_n-H\vert$ }\label{t1}
\centering
 \vspace{2mm}
 {\footnotesize{
\begin{tabular}{llllllllll}
\hline
&\multicolumn{9}{c}{$n$ points}
\\
\cline{2-10}
\noalign{\smallskip}
\textbf{H} &\multicolumn{1}{c}{$100$} &\multicolumn{1}{c}{$250$} &\multicolumn{1}{c}{$1000$} &\multicolumn{1}{c}{$2500$} &\multicolumn{1}{c}{$10^4$} &\multicolumn{1}{c}{$2.5\cdot10^4$} & \multicolumn{1}{c}{$10^5$} & \multicolumn{1}{c}{$2.5\cdot10^5$} & \multicolumn{1}{c}{$10^6$} \\
\hline \noalign{\smallskip} \textbf{0.55} & 0,08401& 0,05488& 0,02124& 0,01467& 0,00777& 0,00549&  0,00195& 0,00160& 0,00079\\
\textbf{0.6} &  0,07216& 0,04145& 0,02213& 0,01286& 0,00683& 0,00466& 0,00214& 0,00137& 0,00069\\
\textbf{0.65} & 0,07761& 0,04811& 0,01972& 0,01296& 0,00626& 0,00414& 0,00210& 0,00144&0,00066\\
\textbf{0.7}& 0,05364& 0,03403& 0,02023& 0,01219& 0,00608& 0,00341& 0,00183& 0,00125&0,00065\\
\textbf{0.75}   & 0,06485& 0,03798& 0,02187& 0,01147& 0,00707& 0,00424& 0,00211&0,00140& 0,00083\\
\textbf{0.8 }& 0,05938& 0,03884& 0,02040&  0,01307& 0,00791& 0,00528& 0,00303& 0,00227&0,00120\\
\textbf{0.85} & 0,04666& 0,03577& 0,02105& 0,01684&  0,01011& 0,00753& 0,00511& 0,00384&0,00249\\
\textbf{0.9} &  0,06311& 0,04642& 0,03037& 0,02338& 0,01667& 0,01352& 0,00984& 0,00801&0,00599\\
\textbf{0.95} & 0,06219& 0,04763& 0,03488& 0,02907& 0,02295& 0,02018& 0,01640& 0,01448&0,01213\\
\hline
\end{tabular}
}}
\end{table}

\begin{table}[h!]
\caption{  $\vert\widehat H^{(1)}_n-H\vert\cdot n^{0.25}(\ln n)^{-0.3}$ }\label{t1}
\centering
 \vspace{2mm}
 {\footnotesize{
\begin{tabular}{llllllllll}
\hline
&\multicolumn{9}{c}{$n$ points}
\\
\cline{2-10}
\noalign{\smallskip}
\textbf{H} &\multicolumn{1}{c}{$100$} &\multicolumn{1}{c}{$250$} &\multicolumn{1}{c}{$1000$} &\multicolumn{1}{c}{$2500$} &\multicolumn{1}{c}{$10^4$} &\multicolumn{1}{c}{$2.5\cdot10^4$} & \multicolumn{1}{c}{$10^5$} & \multicolumn{1}{c}{$2.5\cdot10^5$} & \multicolumn{1}{c}{$10^6$} \\
\hline
\noalign{\smallskip}
\textbf{0.55} & 0.16802&    0.13070&    0.06689&    0.05596& 0.03991& 0.03449&  0.01663&    0.01681&    0.01136 \\
\textbf{0.6} &  0.14433& 0.09873& 0.06969&  0.04906&    0.03506&    0.02923&    0.01828&    0.01435&    0.00994\\
\textbf{0.65} & 0.15522&    0.11458&    0.06212&    0.04942&    0.03215&    0.02602&    0.01793&    0.01510& 0.00954\\
\textbf{0.7}& 0.10727&  0.08104&    0.06370&    0.04652&    0.03125&    0.02142&    0.01565& 0.01311& 0.00931\\
\textbf{0.75}& 0.12970& 0.09044&    0.06888&    0.04376&    0.03631&    0.02664&    0.01804& 0.01471& 0.01189\\
\textbf{0.8 }& 0.11877& 0.09250&    0.06426&    0.04987&    0.04066&    0.03313&    0.02590& 0.02382& 0.01723\\
\textbf{0.85} & 0.09331&    0.08519&    0.06628&    0.06425&    0.05193&    0.04726& 0.04363& 0.04031&  0.03580\\
\textbf{0.9} &  0.12622& 0.11055&   0.09563&    0.08920&    0.08563&    0.08485&    0.08410&    0.08413&    0.08612\\
\textbf{0.95} & 0.12438&    0.11344&    0.10984&    0.11089&    0.11789&    0.12669& 0.14009&   0.15206&    0.17450\\
\hline
\end{tabular}
}}
\end{table}

\begin{table}[h!]
\caption{  $\vert\widehat H^{(1)}_n-H\vert\cdot n^{0.5}(\ln n)^{-0.5}$ }\label{t1}
\centering
 \vspace{2mm}
 {\footnotesize{
\begin{tabular}{llllllllll}
\hline
&\multicolumn{9}{c}{$n$ points}
\\
\cline{2-10}
\noalign{\smallskip}
\textbf{H} &\multicolumn{1}{c}{$100$} &\multicolumn{1}{c}{$250$} &\multicolumn{1}{c}{$1000$} &\multicolumn{1}{c}{$2500$} &\multicolumn{1}{c}{$10^4$} &\multicolumn{1}{c}{$2.5\cdot10^4$} & \multicolumn{1}{c}{$10^5$} & \multicolumn{1}{c}{$2.5\cdot10^5$} & \multicolumn{1}{c}{$10^6$} \\
\hline
\noalign{\smallskip}
\textbf{0.55} & 0.59403 &  0.56033 &  0.38778 &  0.39788 &  0.38848 &  0.41418 &  0.27528 &  0.34462 &  0.32254 \\
\textbf{0.6} & 0.51028 &  0.42327 &  0.40402 &  0.34882 &  0.34127 &  0.35107 &  0.30255 &  0.29419 &  0.28218 \\
\textbf{0.65} & 0.54879 &  0.49122 &  0.36011 &  0.35142 &  0.31289 &  0.31248 &  0.29686 &  0.30960 &  0.27064 \\
\textbf{0.7} & 0.37926 &  0.34744 &  0.36932 &  0.33076 &  0.30417 &  0.25725 &  0.25904 &  0.26879 &  0.26421 \\
\textbf{0.75} &  0.45857 &  0.38776 &  0.39935 &  0.31116 &  0.35343 &  0.31994 &  0.29864 &  0.30156 &  0.33742 \\
\textbf{0.8} &  0.41990 &  0.39658 &  0.37253 &  0.35462 &  0.39573 &  0.39783 &  0.42871 &  0.48825 &  0.48905 \\
\textbf{0.85} &  0.32990 &  0.36523 &  0.38428 &  0.45683 &  0.50541 &  0.56751 &  0.72214 &  0.82624 &  1.01618 \\
\textbf{0.9} &  0.44627 &  0.47396 &  0.55444 &  0.63424 &  0.83341 &  1.01898 &  1.39207 &  1.72453 &  2.44410 \\
\textbf{0.95} &  0.43976 &  0.48637 &  0.63677 &  0.78848 &  1.14744 &  1.52138 &  2.31880 &  3.11678 &  4.95257 \\
\hline
\end{tabular}
}}
\end{table}

\end{document}